\numberwithin{equation}{section}
\newtheorem{thm}{Theorem}[section]
\newtheorem{cor}[thm]{Corollary}
\newtheorem{rem}{Remark}[section]
\newtheorem{ex}{Example}[section]
\def\eps{\varepsilon}
\def\a{\alpha}
\def\de{\partial}
\def\d{\delta}
\newcommand{\ol}{\overline}
\newcommand{\ul}{\underline}
\newcommand{\Z}{{\mathbb Z}}
\newcommand{\R}{{\mathbb R}}
\begin{document}

\title[Liouville properties and critical value of fully nonlinear elliptic operators]{
Liouville properties and critical value\\
of fully nonlinear elliptic operators}
\author[M. Bardi, A. Cesaroni]{Martino Bardi 
 {and}  Annalisa Cesaroni 
}
 \thanks{
 The authors  are members of the Gruppo Nazionale per l'Analisi Matematica, la Probabilit\`a e le loro Applicazioni (GNAMPA) of the Istituto Nazionale di Alta Matematica (INdAM), and are partially supported by the research project of the University of Padova "Mean-Field Games and Nonlinear PDEs".
}
\address{Department of Mathematics, University of Padova, Via Trieste 63, 35121 Padova, Italy} \email{bardi@math.unipd.it}
\address{Department of Statistical Sciences,  University of Padova, via Cesare Battisti  241/243,  35121 Padova, Italy} \email{annalisa.cesaroni@unipd.it}
\begin{abstract} 
We prove some Liouville properties for sub- and supersolutions of fully nonlinear degenerate elliptic equations in the whole space. Our assumptions allow the coefficients of the first order terms to be large at infinity, provided they have an appropriate  sign, as in Ornstein-Uhlenbeck operators. We give two applications. The first is a stabilization property for large times of solutions to fully nonlinear parabolic equations. The second is the solvability of an ergodic Hamilton-Jacobi-Bellman equation that identifies a unique critical value of the operator.
\end{abstract}
\subjclass{35B53, 
35B40  
35J70   	
35J60   
49L25   	
}
\keywords{Liouville property, fully nonlinear PDEs, degenerate elliptic PDEs, stabilization in parabolic equations, ergodic Hamilton-Jacobi-Bellman equations, viscosity solutions.}
\maketitle

\section{Introduction}
We consider fully nonlinear degenerate elliptic partial differential equations
\begin{equation}
F(x,u, Du, D^2u) =0 , \qquad\text{in } \R^N ,
\label{F0}
\end{equation}
within the theory of viscosity solutions, and look for sufficient conditions for the validity of Liouville type results such as
\begin{multline} 
\text{any subsolution (respectively, supersolution) of \eqref{F0}
}\\\text{ bounded from above (respectively, from below) is a constant.}
\label{L}
\end{multline} 
For solutions of the equation $F(D^2u) =0$ with $F$ uniformly elliptic, the result follows from the Harnack-type inequalities 
 for such PDEs \cite{CC}. For subsolutions, however, such inequalities do not hold and different tools must be used.
 Cutr\`i and Leoni \cite{CL} proved \eqref{L} for subsolutions of 
 equations of the form $F(x, D^2u) + h(x)u^p =0$ by a nonlinear extension of the Hadamard three spheres theorem. Capuzzo Dolcetta and Cutr\`i \cite{CDC} and Chen and Felmer \cite{CF} studied inequalities 
 with $F$ of the general form \eqref{F0}, where the dependence on the first order derivatives 
  is a nontrivial difficulty that is overcome if  the coefficients multiplying $Du$ decay at infinity in a suitable way. 
  All these papers assume the uniform ellipticity of $F$. 
 
 Our approach is different and requires different assumptions. It is inspired by our paper \cite{BCM} on a linear equation of Ornstein-Uhlenbeck type modelling stochastic volatility in finance. We suppose the existence of a sort of Lyapunov function $w$ for the operator $F$, namely, a supersolution of \eqref{F0} for $|x|>R_o$, for some $R_o$, and such that $w\to +\infty$ as $|x|\to +\infty$. We search examples of such functions among radial ones. For instance, in the case of a Hamilton-Jacobi-Bellman operator
 \[
 \inf_{\a\in A} \{-tr (a(x,\a) D^2u) - b(x,\a)\cdot Du + c(x,\a) u\}
  \]
 $w(x)=|x|^2$ is a  Lyapunov function if
\[
\sup_{\a\in A} (tr\, a(x,\a) + b(x,\a)\cdot x - c(x,\a)|x|^2/2) \leq 0 \quad\text{ for } |x|\geq R_o .
\]
If $\inf c >0$ this allows for quite general coefficients $a, b$, whereas for $c\equiv 0$ it is satisfied by a drift $b$ of the kind appearing in Ornstein-Uhlenbeck operators plus a possible perturbation of lower order, such as
\[
b(x,\a)=\gamma(m-x) +\tilde b(x,\a) , \qquad \gamma >0, \qquad \lim_{x\to\infty} \sup_{\a\in A}\frac{\tilde b(x,\a)\cdot x}{|x|^2}=0 .
\]
Note that here it is helpful that $b$ is large for $|x|$ large, provided 
it has the appropriate sign, that is, it
points towards the origin, whereas  in \cite{CDC, CF} $b$ must be small at infinity.

The other main 
ingredient of our method is a strong maximum principle for the equation  \eqref{F0}. This is true in the uniformly elliptic case,  but also for several degenerate operators, see \cite{BDL1, BDL}. Our main example is a quasilinear operator whose principal part is hypoelliptic in H\"ormander's sense. This seems to be the first Liouville-type result for subelliptic inequalities with nonlinearities involving $Du$. We refer to Chapter 5.8 of the monograph \cite{BLU} and the references therein for a survey about Liouville properties for sublaplacians, mostly obtained by Harnack-type inequalities for solutions. We refer also to \cite{CDC97} for results on inequalities of the form $Lu + h(x)u^p\leq 0$ with $L$ linear degenerate elliptic, and to \cite{LK09, LK15, MMT} for more recent results on linear subelliptic equations. 

For uniformly elliptic $F$ with constants $0<\lambda\leq\Lambda$ we exploit the comparison with Pucci maximal and minimal operators $\mathcal M^+, \mathcal M^-$ associated to $\lambda, \Lambda$ and the Lyapunov function $w(x)=\log|x|$. If 
\[
 F(x,t, p, X) \geq \mathcal M^-(X) +  \inf_{\a\in A} \{c(x,\a)t - b(x,\a)\cdot p\} ,
 \]
 we prove the Liouville property for subsolutions under the assumption
\[
\sup_{\a\in A} ( b(x,\a)\cdot x - c(x,\a)|x|^2\log(|x|)) \leq  \lambda - (N-1)\Lambda  \quad\text{ for } |x|\geq R_o.
\]
This e some results in  \cite{CDC}. Let us mention that other results on 
Liouville type properties for fully nonlinear equations are in the papers \cite{BD, ARV, R, AS, PP}.

The second part of the paper is devoted to two applications of the Liouville properties, both for uniformly elliptic $F$. The first is the stabilization in the space variables for large times of solutions to the parabolic equation
\[
u_t + F(x,
 Du, D^2u)=0 \quad \text{ in  }[0, +\infty)\times \R^N, \qquad
u(0,x) = h(x)  \quad   \text{ in  }\R^n,
\]
with $F$ positively 1-homogeneous in $(p,X)$ and $h \in BUC(\R^N)$. We prove that
$$
\limsup_{t\to +\infty} u(t,x)=\ol u \qquad \text{and} \qquad 
\liminf_{t\to +\infty} u(t,x)=\ul u
$$
 are constant, a result previously known for $F$ and $h$ periodic in $x$ (and in such a case $\ol u=\ul u$ and the convergence is uniform). 
 The stabilization to a constant $\ol u=\ul u$ has been studied by several authors for linear equations under additional conditions on $h$ (see \cite{EKT} and the references therein),  and it is known  that even for the heat equation it can be $\ol u>\ul u$ for some bounded and smooth $h$ \cite{CE}.

The second application concerns the so-called ergodic HJB equation
 \begin{equation*}
 \inf_{\alpha\in A} \{-tr\, a(x,\a)D^2\chi(x) + b(x,\a)\cdot D\chi(x) -l(x,\a)\}= c,\qquad   x\in\R^N,
\end{equation*}
where the unknowns are the critical value $c\in \R$ and $\chi\in C(\R^N)$ that must also satisfy a growth condition as $|x|\to\infty$. This problem arises in ergodic stochastic control (see, e.g., \cite{AL, ABG, ks, Ichi, Cir} and the references therein), weak KAM theory in the 1st order case $a\equiv 0$ \cite{LPV, Fat}, periodic homogenization \cite{LPV, Ev92, ABM}, singular perturbations \cite{BA2, BCM, bc}, and long-time behavior of solutions to non-homogeneous parabolic equations (see, e.g., \cite{bs, fil, IS, CFP} and the references therein). The Liouville property plays a crucial role in the proof of the uniqueness of $\chi$, up to additive constants, and of $c$. The existence of the solution pair is proved by the vanishing discount approximation and using the Krylov-Safonov H\"older estimates, 
as in \cite{Ev92, AL, BA2} for the periodic case and 
 in \cite{BCR, CCR} for HJB equations degenerating at the boundary of a bounded open set.  
The case of a semilinear uniformly elliptic equation in the whole space, under some dissipativity condition, has been considered in \cite{ks, Ichi}, see also references therein. To our knowledge our result is the first for fully nonlinear elliptic equations in the whole $\R^N$ without any periodicity assumption obtained by PDE methods. See  \cite{ABG, CFP} for probabilistic results under different conditions.

The paper is organized as follows. In Section \ref{sect:HJB}  we prove Liouville properties a bit more general than \eqref{L} (the sub- and supersolution can be unbounded, provided they are controlled at infinity by the Lyapunov function) for possibly degenerate HJB operators, and then refine the results for Pucci extremal operators plus lower order terms. Section \ref{sect:ue} is devoted to general uniformly elliptic operators and Section \ref{sect:hyp} to quasilinear HJB inequalities with hypoelliptic principal part. In Section \ref{sect:lts} we study the stabilization in space for large times of solutions to fully nonlinear parabolic equations. Finally, Section \ref{sect:erg} deals with the unique solvability of the ergodic HJB equation.

\section{Hamilton-Jacobi-Bellman operators}\label{sect:HJB}
\subsection{General HJB operators}
We begin with the concave operators
\begin{equation}\label{HJB}
L^\a u:=tr (a(x,\a) D^2u) + b(x,\a)\cdot Du , \qquad G[u]:=\inf_{\a\in A} \{-L^\a u + c(x,\a) u\}, 
\end{equation}
where the coefficients $a, b, c$ are defined in $\R^N\times A$ and are at least continuous,  
 $A$ is a metric space, and $tr$ denotes the trace. Throughout the paper sub- and supersolutions are meant in the viscosity sense.
We assume the following conditions: \begin{enumerate}
\item \label{CP}
$F(x,t,p,X)=\inf_{\a\in A} \{-tr (a(x,\a)X) - b(x,\a)\cdot p + c(x,\a) t\}$ is continuous in $\R^N\times\R\times\R^N\times S^N$, $c\geq 0$, and $G$  satisfies the Comparison Principle in any bounded open set $\Omega$, 
i.e., if $u, v$ are, respectively, a sub- and a  supersolutions of $G[u]=0$ in 
$\Omega$ and $u\leq v$ on $\de\Omega$, then $u\leq v$ in 
 $\Omega$;
\item \label{SMP} $G$ satisfies the Strong Maximum Principle, i.e., any viscosity subsolution in $\R^N$ that attains an interior nonnegative maximum must be constant;
\item \label{w} there exist $R_o\geq 0$ and $w\in LSC(\R^N)$ such that $G[w]\geq 0$  for $|x|>R_o$ and $\lim_{|x|\to\infty} w(x)=+\infty$.
\end {enumerate}

Sufficient conditions for \eqref {CP} are well-known and will be recalled 
later in this section (a general reference is \cite{CIL}).
Sufficient conditions for \eqref{SMP}  can be found in \cite{BDL1, BDL},  we will use the strict ellipticity 
 \eqref{C3}  in this section 
 and some form of hypoellipticity in Section \ref{sect:hyp}.

\begin {thm}\label{teoHJB}Assume \eqref{CP}, \eqref{SMP}  and \eqref{w}. Let  $u\in USC(\R^N)$ satisfy  $G[u]\leq 0$ in $\R^N$ 
and \begin{equation}
\label{upperbound}
\limsup_{|x|\to\infty} \frac{u(x)}{w(x)}\leq 0.
\end{equation}
If either $u\geq 0$ or $c(x,\a)\equiv 0$ for every $x,\a$, then $u$ is constant. 
\end{thm}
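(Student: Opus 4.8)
The plan is to show that $\sup_{\R^N}u$ is attained at an interior point and then to apply the Strong Maximum Principle \eqref{SMP} --- to $u$ itself when $u\geq0$, and to $u$ minus its maximum when $c\equiv0$. The Lyapunov function $w$ of \eqref{w} will play the role of a barrier from above.

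First I would make two harmless reductions. One may assume $R_o>0$ (otherwise enlarge it; this preserves \eqref{w}) and $w\geq0$: since $w$ is $LSC$ with $w(x)\to+\infty$ it is bounded below, and if $\inf_{\R^N}w=w_o<0$ one replaces $w$ by $w-w_o$, which is still $LSC$, still tends to $+\infty$, still satisfies \eqref{upperbound} (because $(w-w_o)/w\to1$ at infinity), and still satisfies $G[w-w_o]\geq0$ for $|x|>R_o$: indeed $-w_o\geq0$ and $c\geq0$, so adding this constant to the argument of $G$ does not decrease it. Then I would set $m_o:=\max_{\ol{B}_{R_o}}u$, which is finite and attained at some $x_o\in\ol{B}_{R_o}$ since $u\in USC(\R^N)$ and $\ol{B}_{R_o}$ is compact.

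The core step is the claim that $\sup_{\R^N}u=m_o$. Fix $\eps>0$. I would first observe that $\eps w+m_o$ is a supersolution of $G[\cdot]=0$ on $\{|x|>R_o\}$: one has $G[\eps w]=\eps\,G[w]\geq0$ there, and adding the constant $m_o$ preserves the inequality either because $m_o\geq0$ and $c\geq0$, or because $c\equiv0$ so that constants are immaterial (only in this latter case can $m_o$ be negative). On the inner sphere $\{|x|=R_o\}$ one has $u\leq m_o\leq\eps w+m_o$, using $w\geq0$. At infinity, \eqref{upperbound} together with $w>0$ for $|x|$ large gives that for every $\eta>0$ one has $u\leq\eta w$ outside a large ball; taking $\eta=\eps/2$ yields $u-\eps w\leq-(\eps/2)w\to-\infty$ as $|x|\to\infty$, so $u\leq\eps w+m_o$ on $\{|x|=\rho\}$ for all $\rho$ large. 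Applying the Comparison Principle \eqref{CP} on the bounded open annulus $\{R_o<|x|<\rho\}$ --- where $u$ is a subsolution, $\eps w+m_o$ a supersolution, and $u\leq\eps w+m_o$ on the whole boundary --- and letting $\rho\to+\infty$ gives $u\leq\eps w+m_o$ on $\{|x|>R_o\}$; together with $u\leq m_o\leq\eps w+m_o$ on $\ol{B}_{R_o}$ this gives it on all of $\R^N$. Letting $\eps\to0^+$ then gives $u\leq m_o$ everywhere, which proves the claim; in particular $u$ attains its maximum over $\R^N$ at the interior point $x_o$.

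Finally I would invoke \eqref{SMP}. If $u\geq0$ then $m_o=u(x_o)\geq0$, so $u$ attains a nonnegative interior maximum and is therefore constant. If $c\equiv0$, then $u-m_o$ is a subsolution of $G[\cdot]=0$ in $\R^N$ (adding a constant alters neither the derivatives of the test functions nor the operator, which has no zeroth order term in this case), it attains the nonnegative interior maximum $0$ at $x_o$, so $u-m_o$, and hence $u$, is constant. I expect the barrier/comparison step to be the only delicate point: one must check with care that $\eps w+m_o$ is an admissible supersolution on the annulus and that $u\leq\eps w+m_o$ holds on both boundary spheres --- in particular the quantitative use of \eqref{upperbound} needed to push the outer-sphere inequality out to infinity. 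The rest is routine.
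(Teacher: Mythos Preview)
Your argument is correct and follows the same overall scheme as the paper: use the Lyapunov function $w$ as a barrier, apply the Comparison Principle \eqref{CP} on expanding annuli to show that $u$ attains its global maximum on $\ol{B}_{R_o}$, and conclude by the Strong Maximum Principle \eqref{SMP}.

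The one genuine difference is in how the barrier is deployed. The paper subtracts the barrier from $u$, setting $u_\eta:=u-\eta w$, and then must devote a full step (its Step~3) to proving that $u_\eta$ is a viscosity subsolution of $G[\cdot]\leq 0$ on $\{|x|>R_o\}$; this requires a nontrivial viscosity argument because one is combining the USC subsolution $u$ with the (only) LSC supersolution $w$. You instead add the barrier to the constant, comparing $u$ directly with $\eps w+m_o$. Since $G$ is positively $1$-homogeneous in its argument, $\eps w$ is immediately a viscosity supersolution, and adding the constant $m_o$ (nonnegative when $u\geq 0$, irrelevant when $c\equiv 0$) preserves this by the sign condition $c\geq 0$. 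This buys you a shorter and cleaner proof of the comparison inequality, and it also dispenses with the paper's preliminary reduction ``$u$ is not constant on $\ol{B}_{R_o}$'' (needed there to guarantee $C_\eta>0$), since your $m_o\geq 0$ follows directly from $u\geq 0$. Both routes arrive at $u(x)\leq\max_{|y|\leq R_o}u(y)$ and finish identically.
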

\begin{proof}
We divide the proof in various steps. 

{\bf Step 1} . Define $u_\eta(x):=u(x)-\eta w(x)$, for $\eta>0$.  
Possibly increasing $R_o$ we can assume that  $u$ is not constant in the  ball  $\{x\ |\ |x|\leq R_o\}$, otherwise we are done. 
Set
\[
C_\eta:=\max_{|x|\leq R_o} u_\eta(x) .
\]

First of all we show that under our assumptions, $G[C_\eta]\geq 0$ for every $\eta$ sufficiently small. 

Indeed, if $c(x,\alpha)\equiv 0$ then necessarily $G[C_\eta]=0$. 
On the other hand, if $c\not\equiv 0$ then $u\geq 0$ and in this case   we can assume that for $\eta$ sufficiently small $C_\eta>0$. 
In fact, if this were not the case, we could conclude letting $\eta\to 0$ that $u(x)=0$ for every $|x|\leq R_o$, in contradiction with the fact that we assumed that $u$ is not constant 
in the  ball  $\{x\ |\ |x|\leq R_o\}$.

{\bf Step 2} 
The growth condition  \eqref{upperbound} implies that 
$\limsup_{|x|\to\infty} \frac{u_\eta(x)}{w(x)}\leq -\eta<0$ for all $\eta>0$, so $\lim_{|x|\to\infty} u_\eta(x)=-\infty$.
Then for all $\eta>0$ there exists $M_\eta>R_o$ such that
\[
u_\eta(x) \leq C_\eta \quad \text{ for all } |x|\geq M_\eta.
\]

{\bf Step 3} . We prove that for all $\eta>0$, $u_\eta$ satisfies $G[u_\eta]\leq 0$   in $\{x\ | |x|>R_o\}$. 

Fix  $\overline{x}$, $|\overline{x}|>R_o$, 
and a smooth function $\phi$ such that $u_\eta(\ol{x})-\phi(\ol{x})=0$ and $u_\eta-\phi$ has a strict maximum at $\overline{x}$. 

Assume by contradiction that $G[\phi(\ol{x})]>0$.  Let $\delta>0$ sufficiently small  such that $G[\phi(\ol{x})-\delta]>0$.  Indeed since  $G[\phi(\ol{x})]=F(\ol{x}, \phi(\ol{x}), D\phi(\ol{x}), D^2\phi(\ol{x}))>0$,  by continuity of $F$, stated in assumption   \eqref{CP}, there exists $\delta>0$ such that 
 $F(\ol{x}, \phi(\ol{x})-\delta, D\phi(\ol{x}), D^2\phi(\ol{x}))>0$. So, using again continuity of $F$ and regularity of $\phi$  we get that there exists $0<r<|\ol{x}|-R_o$ such that  $G[\phi-\delta]>0$ in $B(\ol{x}, r)$.   

Since $u_\eta-\phi$ has a strict maximum at $\ol{x}$, there exists $0<k<\delta$ such that $u_\eta-\phi\leq-k<0$ on $\partial B(\ol{x},r)$.

Moreover, we claim that  $\eta w+\phi-k$ satisfies 
  $G[\eta w+\phi-k]\geq 0$  in $B(\ol{x},r)$. Indeed take $\tilde{x}\in B(\ol{x},r)$ and $\psi$ smooth such that $\eta w+\phi-k-\psi$ has a minimum at $\tilde{x}$.  Using the fact that $w$ is a viscosity supersolution, \eqref{w},   we get
   \begin{eqnarray*} 
0&\leq&  G[\psi(\tilde{x})-\phi(\tilde{x})+k]=\inf_{\a\in A} \{-L^\a \psi(\tilde{x}) + L^\a \phi(\tilde{x})+c(\tilde{x},\alpha) (\psi(\tilde x)-\phi(\tilde x)+k)\} \\ & \leq&  \inf_{\a\in A} \{-L^\a \psi(\tilde{x})+c(\tilde{x},\alpha) \psi(\tilde x)\}-\inf_{\a\in A} \{-L^\a \phi(\tilde{x})+c(\tilde{x},\alpha) (\phi(\tilde x)-\delta)\}\\&=&G[\psi(\tilde{x})]-G[\phi(\tilde{x})-\delta]<G[\psi(\tilde{x})] .
\end{eqnarray*} 
Therefore $G[\psi(\tilde{x})]\geq 0$, which implies that $G[\eta w+\phi - 
k]\geq 0$  in $B(\ol{x},r)$.

Since $u\leq \eta w+\phi -k$ on $\partial B(\ol{x},r)$, we can now apply the Comparison Principle and get  $u\leq \eta w+\phi -k$ in $B(\ol{x}, r)$, in contradiction with the fact that $u(\ol{x})=\eta w(\ol{x})+\phi(\ol{x})$.

{\bf Step 4} .  Now we use the Comparison Principle in $\Omega=\{x : R_o<|x|<M_\eta\}$. 
Since $G[C_\eta]\geq 0$, by Step 1,  we get $u_\eta(x) \leq C_\eta$ in $\Omega$, using Step 2 and 3. Therefore 
\[
u_\eta(x) \leq C_\eta \quad \text{ for all } |x|\geq R_o .
\]
By letting $\eta\to 0^+$ we obtain 
that 
\[
u(x)\leq \max_{|y|\leq R_o} u(y) 
 \]
and then $u$ attains its maximum $\ol{x}$ over $\R^N$. 
Now if $u\geq 0$ the Strong Maximum Principle gives the desired conclusion. If, on the other hand $c(x,\a)\equiv 0$, then we substitute $u$ with $u+|u(\ol{x})|$ and we conclude. 
\end{proof}

Now we turn to the study of Liouville properties for  supersolutions and 
 we substitute assumption \eqref{SMP} with the following 
 \begin{enumerate}\item[$(2')$]  $G$ satisfies the Strong Minimum Principle, i.e., any viscosity supersolution in $\R^N$ that attains an interior nonpositive minimum must be constant.
\end{enumerate} 
\begin{rem}
\upshape
  Let $v\in LSC(\R^N)$ satisfy $G[v]\geq 0$ 
and 
$\liminf_{|x|\to\infty} \frac{v(x)}{w(x)}\geq 0.$
Assume \eqref{CP}, $ (2')$ and \eqref{w} where the condition $G[w]\geq 0$ can be replaced by the much weaker requirement that  $-L^\a w + c(x,\a) w\geq 0$ for some $\a$. Then an argument similar to the proof of Theorem \ref{teoHJB} gives that,
if either $v\leq 0$  or $c(x,\a)\equiv 0$ for every $x,\a$,  then $v$ is a constant. 
 \end{rem}
 %
Consider convex operators of the form 
\begin{equation}
\label{HJB2}
 \tilde G[u]:=\sup_{\a\in A} \{-L^\a u + c(x,\a) u\},
 \end{equation} where $L^\a$ is as in \eqref{HJB}. 
The assumption \eqref{w} will be replaced with the following 
 \begin{equation}
 \label{wconv}
 \text{ 
$\exists R_o\geq 0$ and $W\in USC(\R^N)$ such that $\tilde G[W]\leq 0$  for $|x|>R_o$  and $\lim_{|x|\to\infty} W(x)=-\infty$.}
\end{equation}
\begin {thm}
\label{convex} 
Assume \eqref{CP}, $(2')$, \eqref{wconv},  and let $v\in LSC(\R^N)$ be a supersolution to $\tilde G[v]\geq 0$ in $\R^N$, such that 
 \begin{equation}
 \label{nuova}
\liminf_{|x|\to +\infty} \frac{v(x)}{W(x)}\leq 0.
\end{equation}
 If 
either $v\leq 0$ or $ c(x,\a)\equiv 0$, then  $v$ is constant.
\end{thm}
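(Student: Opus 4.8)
The plan is to mimic the proof of Theorem~\ref{teoHJB}, but with all inequalities reversed, using the convex operator $\tilde G$, the barrier $W$ from \eqref{wconv}, and the Strong Minimum Principle $(2')$ in place of the Strong Maximum Principle. First I would set $v_\eta(x):=v(x)-\eta W(x)$ for $\eta>0$; since $W\to -\infty$, we have $v_\eta \geq v$, and the growth condition \eqref{nuova} gives $\liminf_{|x|\to\infty} v_\eta(x)/W(x) \geq \liminf v(x)/W(x) - \eta$. Because $W(x)\to -\infty$, dividing flips the sign, so this $\liminf$ being $\leq -\eta<0$ forces $v_\eta(x)\to +\infty$ as $|x|\to\infty$ (one must be a little careful here: \eqref{nuova} is a $\liminf$, so it only guarantees $v_\eta$ is unbounded along a sequence; but combined with the lower bound $v_\eta\geq v$ I should instead argue via $m_\eta:=\min_{|x|\leq R_o} v_\eta$ and show $v_\eta \geq m_\eta$ outside a large ball — this is exactly the analogue of Step~2, and I expect the correct reading is that \eqref{nuova} should be paired with $v$ bounded below, or that one works with the subsequence realizing the $\liminf$, as in the companion Remark).

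The core is the analogue of Step~3: for every $\eta>0$, $v_\eta$ is a supersolution of $\tilde G[v_\eta]\geq 0$ in $\{|x|>R_o\}$. The argument is the test-function computation: take $\bar x$ with $|\bar x|>R_o$ and smooth $\phi$ touching $v_\eta$ from below at $\bar x$ with a strict minimum; suppose for contradiction $\tilde G[\phi(\bar x)]<0$; pick $\delta>0$ with $\tilde G[\phi(\bar x)+\delta]<0$ (possible since $c\geq 0$ and $F$ is continuous, so increasing the zeroth-order argument only decreases $\tilde G$, or at worst keeps it negative — here the sign $c\geq 0$ is what makes the $+\delta$ perturbation go the right way, just as $-\delta$ worked for $G$), then get a small ball $B(\bar x,r)\subset\{|x|>R_o\}$ where $\tilde G[\phi+\delta]<0$; choose $0<k<\delta$ with $v_\eta-\phi\geq k>0$ on $\partial B(\bar x,r)$; show $\eta W+\phi+k$ is a \emph{subsolution} of $\tilde G\leq 0$ in $B(\bar x,r)$ using $\tilde G[W]\leq 0$ from \eqref{wconv} and the superadditivity-type inequality $\sup_\a\{-L^\a(\cdot)+c\,(\cdot)\}$ (the sup of sums is $\geq$ sum of sups works the wrong way, so one uses $\sup(f+g)\leq \sup f + \sup g$ to bound $\tilde G[\eta W + \phi + k - \psi]$ from above by $\tilde G[\eta W] + \text{(something)}$, mirroring the displayed chain in Step~3); then apply the Comparison Principle in $B(\bar x,r)$ to get $v\geq \eta W + \phi + k$ there, contradicting equality at $\bar x$.

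Then the analogue of Step~4: apply the Comparison Principle on the annulus $\Omega=\{R_o<|x|<M_\eta\}$ with $v_\eta$ a supersolution and the constant $m_\eta$ (which satisfies $\tilde G[m_\eta]\leq 0$ — here is where $v\leq 0$ or $c\equiv 0$ enters, exactly as in Step~1, since then $m_\eta\leq 0$ and $\tilde G[m_\eta]=\sup_\a\{c(x,\a)m_\eta\}\leq 0$) to conclude $v_\eta\geq m_\eta$ on $\Omega$, hence on all of $\{|x|\geq R_o\}$; letting $\eta\to 0^+$ gives $v(x)\geq \min_{|y|\leq R_o} v(y)$, so $v$ attains its global minimum at an interior point. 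If $v\leq 0$ this minimum is nonpositive and $(2')$ finishes; if $c\equiv 0$, replace $v$ by $v-v(\bar x)\geq 0$ wait — replace by $v - $ (its min value) which is $\geq 0$ and still a supersolution with a zero minimum, then $(2')$ applies.

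The main obstacle I anticipate is the $\liminf$ in \eqref{nuova}: unlike the $\limsup$ in Theorem~\ref{teoHJB}, a $\liminf$ condition by itself does not force $v_\eta$ to exceed $m_\eta$ outside a \emph{full} neighborhood of infinity, only along a sequence of radii. Resolving this requires either an additional hypothesis (e.g. $v$ bounded below, which is the natural Liouville setting and is presumably intended) or a more careful comparison argument on a sequence of annuli $\{R_o<|x|<M_j\}$ with $M_j\to\infty$ chosen along the subsequence realizing the $\liminf$, using the Comparison Principle on each and passing to the limit. The rest of the proof is a routine, if somewhat lengthy, sign-reversed transcription of Theorem~\ref{teoHJB}, with the one genuine check being that the convexity of $\tilde G$ (sup rather than inf) makes the subadditivity inequality $\sup(f+g)\le\sup f+\sup g$ the relevant one, which is exactly what is needed for the barrier computation to close.
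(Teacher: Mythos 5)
Your proposal follows the paper's proof of Theorem~\ref{convex} essentially step by step; the paper itself gives only a sketch by pointing back to Theorem~\ref{teoHJB}, and you reproduce exactly that sketch with all inequalities reversed: $v_\eta=v-\eta W$, the constant $c_\eta:=\min_{|x|\le R_o}v_\eta$ with $\tilde G[c_\eta]\le 0$, the supersolution property of $v_\eta$ in $\{|x|>R_o\}$ via the buffer $\delta$, the choice $k<\delta$, the subadditivity $\sup(A-B)\ge\sup A-\sup B$, comparison on $B(\bar x,r)$, then comparison on the annulus, and finally the Strong Minimum Principle $(2')$. The one slip is in your Step~3 parenthetical: for $c\ge 0$, replacing $\phi$ by $\phi+\delta$ \emph{increases} $\tilde G$ rather than decreasing it; what you actually use, just as in the paper's Step~3, is the continuity of $F$ from \eqref{CP} to find $\delta$ with $\tilde G[\phi(\bar x)+\delta]<0$, and the monotonicity $c\ge 0$ enters only later, to pass from $\phi+k$ to $\phi+\delta$.

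Your hesitation about the $\liminf$ in \eqref{nuova} is well founded and in fact exposes a slip that the paper's own proof passes over. The paper asserts ``Following Step~2, by \eqref{nuova} and $W(x)<0$ for $|x|$ large, we get $\lim_{|x|\to\infty}v_\eta(x)=+\infty$,'' but a $\liminf$ hypothesis gives divergence of $v_\eta$ only along a sequence of points, not for all large $|x|$, so the boundary inequality $v_\eta\ge c_\eta$ on the whole sphere $\{|x|=M_\eta\}$ is not guaranteed. The condition under which the argument closes is $\limsup_{|x|\to\infty}v(x)/W(x)\le 0$ (equivalently $\liminf v/(-W)\ge 0$, since $W<0$ eventually), and this is exactly what Corollary~\ref{cor1} supplies: with $W=-|x|^2/2$ its hypothesis $\liminf v/|x|^2\ge 0$ translates to $\limsup v/W\le 0$, not $\liminf v/W\le 0$. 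So \eqref{nuova} is in all likelihood a misprint for $\limsup$. Note, however, that the alternative repair you sketch---comparison on annuli $\{R_o<|x|<M_j\}$ along the subsequence realizing the $\liminf$---does not close the gap as stated, because the $\liminf$ is attained along a sequence of \emph{points}, not of spheres, so $v_\eta\ge c_\eta$ can still fail on part of $\{|x|=M_j\}$. Either the bounded-below hypothesis you mention or reading $\limsup$ for $\liminf$ is the correct fix.
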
 
 \begin{proof}  We consider the function $v_\eta(x)=v(x)-\eta W(x)$. 
 As in Step 1 of the proof of Theorem \ref{teoHJB}, we get that $\tilde G[c_\eta]\leq 0$ for $\eta$ sufficiently small, where 
 $$c_\eta:=\min_{|x|\leq R_o} v_\eta(x) .
 $$
  Following Step 2, by \eqref{nuova} and $W(x)<0$ for $|x|$ large, we get 
 $\lim_{|x|\to\infty} v_\eta(x)=+\infty$  for all $\eta>0$.
Then for all $\eta>0$ there exists $M_\eta>R_o$ such that
\[
v_\eta(x) \geq c_\eta \quad \text{ for all } |x|\geq M_\eta.
\]
Moreover, arguing as in Step 3 of the same proof, it is possible to 
show that  $\tilde G[v_\eta]\geq 0$ for $|x|>R_o$. So, as in Step 4, we   use the Comparison Principle 
to conclude
that $v_\eta(y)\geq c_\eta
 $ for $|y|\geq R_o$. 
Finally, we let $\eta\to 0$ and get $v(y)\geq \min_{|x|\leq R_o} v(x) $ for $|y|\geq R_o$. From this  we deduce, using  the Strong Minimum Principle, that $v$ is constant. 
 \end{proof}

Let us recall some standard 
 conditions on the coefficients of $L^\a$ that imply \eqref {CP}, \eqref{SMP} and $(2')$: $a(x,\a)=\sigma(x,\a)\sigma(x,\a)^T$ for some $N\times m$ matrix-valued function $\sigma$ and 
\begin{multline}
\label{C1}
\forall R>0 \, \exists K_R \text{ such that } \sup_{|x|\leq R}(|\sigma|+|b| +|c|)\leq K_R ,\\ 
\sup_{|x|,|y|\leq R, \a\in A}(|\sigma(x,\a) - \sigma(y,\a)| + |b(x,\a) - b(y,\a)|
)
\leq K_R |x-y| ;
\end{multline}
\begin{equation}
\label{C2}
c(x,\a)\geq 0 \text{ and } c \text{ is continuous in } x \text{ uniformly in } |x|\leq R , \a\in A ; 
\end{equation}
\begin{equation}
\label{C3}
\xi^T a(x,\a) \xi\geq |\xi|^2/K_R \quad \forall \xi\in\R^N , \, |x|\leq R .
\end{equation}

\begin{cor}
\label{cor1}
Assume the operators $L^\a$ satisfy \eqref{C1}, \eqref{C2}, \eqref{C3}, and
\begin{equation}
\label{C4}
\sup_{\a\in A} (tr\, a(x,\a) + b(x,\a)\cdot x - c(x,\a)|x|^2/2) \leq 0 \quad\text{ for } |x|\geq R_o .
\end{equation}
Let $u\in USC(\R^N)$ be a viscosity subsolution to   $G[u]\leq 0$ such that $\limsup_{|x|\to +\infty} \frac{u(x)}{|x|^2}\leq 0$. Assume either that $u\geq 0$ or $c(x,\a)\equiv 0$, then $u$ is a constant.\\
 Let $v\in LSC(\R^N)$be a viscosity supersolution to   $\tilde G[v]\geq 0$ such that $\liminf_{|x|\to +\infty} \frac{v(x)}{|x|^2}\geq 0$.  Assume either that $v\leq 0$ or $c(x,\a)\equiv 0$, then $v$ 
 is a constant.
\end{cor}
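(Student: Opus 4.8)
The plan is to verify that, under \eqref{C1}--\eqref{C4}, the concave operator $G$ falls into the setting of Theorem \ref{teoHJB} with Lyapunov function $w(x)=|x|^2$, and the convex operator $\tilde G$ falls into the setting of Theorem \ref{convex} with $W(x)=-|x|^2$; the corollary is then immediate.

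First I would check the structural assumptions. Conditions \eqref{C1}, \eqref{C2}, \eqref{C3} are the classical hypotheses (local bounds and local Lipschitz continuity in $x$ of $\sigma,b$ uniformly in $\a$, $c\geq 0$ continuous, local uniform ellipticity) under which HJB operators of concave type satisfy the Comparison Principle in bounded open sets, so \eqref{CP} holds for $G$; the same holds for $\tilde G$, either by the analogous classical result or by noting that $v$ is a supersolution of $\tilde G[v]\geq 0$ exactly when $-v$ is a subsolution of $\inf_{\a}\{-L^\a(-v)+c(x,\a)(-v)\}\leq 0$, a concave operator with the same coefficients. Moreover the strict ellipticity \eqref{C3} yields the Strong Maximum Principle \eqref{SMP} for $G$ and, symmetrically, the Strong Minimum Principle $(2')$ for $\tilde G$, by \cite{BDL1, BDL}.

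Next I would produce the Lyapunov functions by direct substitution. The function $w(x)=|x|^2$ is smooth, $w\to+\infty$ as $|x|\to\infty$, and with $Dw=2x$, $D^2w=2I$ one gets, for $|x|\geq R_o$,
\[
G[w](x)=\inf_{\a\in A}\{-2\,tr\,a(x,\a)-2\,b(x,\a)\cdot x+c(x,\a)|x|^2\}=-2\sup_{\a\in A}\Big(tr\,a(x,\a)+b(x,\a)\cdot x-\tfrac{c(x,\a)|x|^2}{2}\Big)\geq 0
\]
by \eqref{C4}; hence $w$ is a classical, therefore viscosity, supersolution, so \eqref{w} holds. Symmetrically $W(x)=-|x|^2$ satisfies $\tilde G[W](x)=2\sup_{\a\in A}(tr\,a(x,\a)+b(x,\a)\cdot x-c(x,\a)|x|^2/2)\leq 0$ for $|x|\geq R_o$ and $W\to-\infty$, so \eqref{wconv} holds.

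Finally I would match the remaining hypotheses: $\limsup_{|x|\to\infty}u(x)/|x|^2\leq 0$ is \eqref{upperbound} with this $w$, while, since $W<0$ off the origin, $\liminf_{|x|\to\infty}v(x)/W(x)=-\limsup_{|x|\to\infty}v(x)/|x|^2\leq 0$ follows from $\liminf_{|x|\to\infty}v(x)/|x|^2\geq 0$, which is \eqref{nuova}; and the sign conditions on $u$ and $v$ coincide with those of the two theorems. Applying Theorem \ref{teoHJB} to $u$ and Theorem \ref{convex} to $v$ finishes the proof. There is no substantial obstacle here: the only points needing care are the sign bookkeeping in the convex case and checking that the quoted comparison and strong maximum/minimum principle results indeed apply under \eqref{C1}--\eqref{C3}.
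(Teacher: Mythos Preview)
Your proposal is correct and follows essentially the same route as the paper: verify \eqref{CP}, \eqref{SMP}, $(2')$ from \eqref{C1}--\eqref{C3} via the cited comparison and strong maximum/minimum principle results, exhibit $w(x)=|x|^2$ (the paper uses $|x|^2/2$, an immaterial difference) and $W=-w$ as the Lyapunov functions guaranteed by \eqref{C4}, match the growth conditions, and apply Theorems \ref{teoHJB} and \ref{convex}.
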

\begin{proof} Note that $G$, $\tilde G$ are uniformly elliptic in any bounded set by \eqref{C3}. Then \eqref{C1}, \eqref{C2}, and \eqref{C3} imply the Comparison Principle on bounded sets \eqref{CP}, see \cite{J} or \cite{BM}. Moreover the Strong Maximum Principle \eqref{SMP} for $G$ and the    the Strong Minimum Principle $(2')$  for $\tilde G$ hold by Corollary 2.7 of \cite{BDL}. 

Next we check the properties \eqref{w} by choosing   $w(x) =|x|^2/2$. 
Since
\[
L^\a w = tr\, a(x,\a) + b(x,\a)\cdot x  
\]
 \eqref{C4} implies 
 $\inf_{\a\in A}\{-L^\a w+c(x,\a)|x|^2/2\} \geq 0$  for $ |x|\geq R_o$. Note that choosing $W(x)=-|x|^2 /2$, the same computation gives that 
 \eqref{C4} implies \eqref{wconv}. 
 
Thus Theorem \ref{teoHJB} and Theorem \ref{convex} give the conclusion.
\end{proof}
\begin{rem} \upshape  If $c$ is bounded away from 0 for $|x|$ large, condition \eqref{C4} is satisfied if $a = o(|x|^2)$ and $b=o(|x|)$ as $x\to\infty$. If $b$ is bounded and $a = o(|x|)$, $c$ can vanish as $x\to\infty$ provided $c(x,\a)\geq c_o/|x|$ with $c_o>0$.
\end{rem}
\begin{rem} 
\label{rem:OU}
\upshape 
The condition 
\begin{equation}
\label{C4bis}
\limsup_{|x|\to\infty} \sup_{\a\in A} (tr\, a(x,\a) + b(x,\a)\cdot x) < 0 
\end{equation}
is sufficient for  \eqref{C4} in view of  \eqref{C2}; it means that the vector field $b$ points toward the origin for $|x|$ large enough and all $\a$, and its inward component is large enough compared to the diffusion matrix $a$. It is satisfied if $a = o(|x|^2)$ and the drift $b$ is a controlled perturbation of a mean reverting drift of Ornstein-Uhlenbeck type, that is, for some $m\in\R^N, \gamma>0$,
\begin{equation}
\label{OU}
b(x,\a)=\gamma(m-x) +\tilde b(x,\a) , \qquad \lim_{x\to\infty} \sup_{\a\in A}\frac{\tilde b(x,\a)\cdot x}{|x|^2}=0 .
\end{equation}
More generally, \eqref{C4bis} holds if there exist $\delta\geq 0, \gamma >0$, and $a_o<\gamma$ such that
\[
\sup_{\a\in A}  b(x,\a)\cdot x = -\gamma |x|^\delta + o(|x|^\delta) , \quad \sup_{\a\in A} tr\, a(x,\a) \leq a_o|x|^\delta + o(|x|^\delta) , 
\]
as $|x|\to\infty$.
\end{rem}
\subsection{Pucci extremal operators}\label{sec:pucci} 
Important examples of uniformly elliptic HJB operators are the Pucci extremal operators. In particular, the minimal operator $\mathcal M^-$ has the form \eqref{HJB}, i.e., it is concave in $u$.
Fix $0< \lambda\leq \Lambda$ and denote with  $S^N$ the set of   $N\times N$ symmetric matrices.  For any $X\in S^N$ define 
\begin{equation}
\mathcal M^-(X):=\inf\{-\mbox{tr} (MX): M\in S^N,\lambda I\leq M \leq \Lambda I
\}\label{pucci-}
\end{equation}
 The operator can also be written as 
$$
\mathcal M^-(X)= -\Lambda \sum_{e_i>0} e_i- \lambda \sum_{e_i<0} e_i,
$$
where $e_i$ are the eigenvalues of $X$. 

The Pucci maximal operator  $\mathcal M^+$ is defined as
\begin{multline}
\mathcal M^+(X):=\sup\{-\mbox{tr} (MX): M\in S^N,\lambda I\leq M \leq \Lambda I
\}
= -\lambda \sum_{e_i>0} e_i- \Lambda \sum_{e_i<0} e_i.\label{pucci+}
\end{multline}

Next we prove the Liouville property for subsolutions of the equation
\begin{equation}
\mathcal M^-(D^2u) + H(x,u, Du)=0 , \qquad\text{in } \R^N ,
\label{pucciHJ}
\end{equation}
where
\begin{equation}
H(x,t, p):= \inf_{\a\in A}\{c(x,\a) t
- b(x,\a)\cdot p\} ,
\label{Hami}
\end{equation}
and supersolutions to   \begin{equation}
\mathcal M^+(D^2u) + \tilde H(x,u, Du)=0 , \qquad\text{in } \R^N ,
\label{pucci+HJ}
\end{equation}
where
\begin{equation*}
\tilde H(x,t, p):= \sup_{\a\in A}\{c(x,\a) t
- b(x,\a)\cdot p\}.
\end{equation*}
In both cases we assume that  the data $b, c$ satisfy conditions \eqref{C1} and  \eqref{C2}. 

For these operators the condition \eqref{C4} reads
\begin{equation*}
 \sup_{\a\in A} ( b(x,\a)\cdot x - c(x,\a)|x|^2/2) \leq  - N\Lambda  \quad\text{ for } |x|\geq R_o .
\end{equation*}
The next result improves a bit Corollary \ref{cor1} in this case, for sub- and supersolutions with growth at infinity controlled by  $\log|x|$.
\begin{cor}
\label{cor2}
Under the previous conditions on $H$ assume
\begin{equation}\label{C5}
\sup_{\a\in A} ( b(x,\a)\cdot x - c(x,\a)|x|^2\log(|x|)) \leq  \lambda
 - (N-1)\Lambda  \quad\text{ for } |x|\geq R_o .
\end{equation}
\begin{itemize}
 \item Let $u\in USC(\R^N)$ be a  subsolution of \eqref{pucciHJ}
such that $\limsup_{|x|\to +\infty} \frac{u(x)}{\log|x|} \leq 0$. Assume that either $c(x,\a)\equiv 0$ or $u\geq 0$, then
$u$ is a constant.
\item Let $v\in LSC(\R^N)$ be a  supersolution of \eqref{pucci+HJ}
such that $\liminf_{|x|\to +\infty} \frac{v(x)}{\log|x|} \geq 0$. Assume that either $c(x,\a)\equiv 0$ or $v\leq 0$, then
$v$ is a constant.\end{itemize} 
\end{cor}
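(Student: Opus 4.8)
The plan is to deduce both statements from Theorem~\ref{teoHJB} and Theorem~\ref{convex}, the only new ingredient being the logarithmic Lyapunov functions $w(x)=\log|x|$ and $W(x)=-\log|x|$ in place of $\pm|x|^2/2$. First I would observe that \eqref{pucciHJ} fits the framework of Section~\ref{sect:HJB}: writing $\beta=(\a,M)$ with $M\in S^N$, $\lambda I\le M\le\Lambda I$, the definition \eqref{pucci-} shows that $\mathcal M^-(D^2u)+H(x,u,Du)=\inf_\beta\{-L^\beta u+c(x,\a)u\}$ with $L^\beta u=\tr (M\,D^2u)+b(x,\a)\cdot Du$, so it has the form $G$ of \eqref{HJB} with the control set $A$ replaced by $A\times\{\lambda I\le M\le\Lambda I\}$ and $\sigma\equiv M^{1/2}$; likewise $\mathcal M^+(D^2u)+\tilde H(x,u,Du)$ has the form $\tilde G$ of \eqref{HJB2}. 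Then \eqref{C1}--\eqref{C3} hold (the constant in \eqref{C3} being $1/\lambda$), so, exactly as in the proof of Corollary~\ref{cor1}, the Comparison Principle \eqref{CP} is available (\cite{J} or \cite{BM}), as are the Strong Maximum Principle \eqref{SMP} for $\mathcal M^-$ and the Strong Minimum Principle $(2')$ for $\mathcal M^+$ (Corollary 2.7 of \cite{BDL}).

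The heart of the argument is the verification of \eqref{w} and \eqref{wconv}. Enlarging $R_o$ we may assume $R_o\ge1$; let $w\in LSC(\R^N)$ coincide with $\log|x|$ for $|x|\ge1$ and vanish on the unit ball, so that $w$ is finite and $w(x)\to+\infty$. For $|x|\ge1$ one has $Dw=x/|x|^2$ and $D^2w=|x|^{-2}(I-2\,\hat x\otimes\hat x)$ with $\hat x:=x/|x|$, whose eigenvalues are $-|x|^{-2}$ (eigenvector $\hat x$) and $|x|^{-2}$ (multiplicity $N-1$); by the eigenvalue formula for $\mathcal M^-$ this gives
\[
\mathcal M^-(D^2w)=\frac{\lambda-(N-1)\Lambda}{|x|^2}.
\]
Since $H(x,w,Dw)=\inf_{\a\in A}\{c(x,\a)\log|x|-b(x,\a)\cdot x/|x|^2\}$, for $|x|>R_o$ we obtain
\[
\mathcal M^-(D^2w)+H(x,w,Dw)=\frac{1}{|x|^2}\Big(\lambda-(N-1)\Lambda-\sup_{\a\in A}\big(b(x,\a)\cdot x-c(x,\a)|x|^2\log|x|\big)\Big),
\]
which is $\ge0$ exactly by \eqref{C5}; this is \eqref{w}. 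Since the growth hypothesis $\limsup_{|x|\to\infty}u(x)/\log|x|\le0$ is \eqref{upperbound} for this $w$, Theorem~\ref{teoHJB} applies and $u$ is constant.

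For the supersolution I would argue symmetrically with $W\in USC(\R^N)$ equal to $-\log|x|$ for $|x|\ge1$ and vanishing on the unit ball, so $W(x)\to-\infty$. Then $D^2W=-D^2(\log|x|)$ has eigenvalues $|x|^{-2}$ (multiplicity $1$) and $-|x|^{-2}$ (multiplicity $N-1$), hence $\mathcal M^+(D^2W)=((N-1)\Lambda-\lambda)/|x|^2$, and the same manipulation shows that $\mathcal M^+(D^2W)+\tilde H(x,W,DW)\le0$ for $|x|>R_o$ is again equivalent to \eqref{C5}; this is \eqref{wconv}. Moreover $\liminf_{|x|\to\infty}v(x)/W(x)=-\limsup_{|x|\to\infty}v(x)/\log|x|\le0$, since $\limsup\ge\liminf\ge0$ by hypothesis, so \eqref{nuova} holds and Theorem~\ref{convex} gives that $v$ is constant.

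I do not expect a genuine obstacle: the whole computation reduces to the identity $\mathcal M^-(D^2(\log|x|))=(\lambda-(N-1)\Lambda)/|x|^2$, whose numerator is precisely the constant on the right of \eqref{C5}. The only points needing (routine) care are turning $\pm\log|x|$ into globally defined, finite, semicontinuous functions --- handled by the truncation near the origin after enlarging $R_o$ --- and noting that, as these functions are smooth on $\{|x|>R_o\}$ where the super/subsolution inequalities are required, \eqref{w} and \eqref{wconv} are satisfied in the classical, hence viscosity, sense.
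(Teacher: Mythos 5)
Your proof is correct and follows essentially the same route as the paper: pick $w(x)=\log|x|$ and $W(x)=-\log|x|$, compute the eigenvalues of $D^2(\log|x|)$ to get $\mathcal M^-(D^2w)=(\lambda-(N-1)\Lambda)/|x|^2$, and reduce the supersolution inequality to \eqref{C5} before invoking Theorems~\ref{teoHJB} and \ref{convex}. The only (harmless) differences from the paper are that you spell out explicitly that $\mathcal M^-+H$ and $\mathcal M^++\tilde H$ fit the HJB framework \eqref{HJB}--\eqref{HJB2} by enlarging the control set, that you compute $D^2(\log|x|)$ directly rather than citing the eigenvalue formula from Lemma~3.1 of \cite{CL}, and that you take care to truncate $\pm\log|x|$ near the origin to get globally finite semicontinuous Lyapunov functions --- a detail the paper leaves implicit.
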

\begin{proof} We have to check the property \eqref{w} (and \eqref{wconv}) and we choose $w(x) =\log(|x|)$ (resp. $W(x)=-\log |x|$). 
We recall that for a function of the form $w(x) =\Phi(|x|)$ with $\Phi : (0,+\infty)\to\R$ of class $C^2$ the eigenvalues of $D^2w(x)$, $x\ne 0$, are  $\Phi''(|x|)$, which is simple, and $\Phi'(|x|)/|x|$ with multiplicity $N-1$, see Lemma 3.1 of \cite{CL}. Since $\log$ is increasing and concave we get
\[
M^-(D^2w)=-\Lambda \frac{N-1}{|x|^2} + \frac{\lambda}{|x|^2} \qquad M^+(D^2 W)=\Lambda \frac{N-1}{|x|^2} - \frac{\lambda}{|x|^2} . 
\]
Thus $w$ is a supersolution of \eqref{pucciHJ} at all points where
\[
 \frac{\lambda}{|x|^2}-\Lambda \frac{N-1}{|x|^2} + \inf_{\a\in A}\left\{c(x,\a) \log(|x|) - \frac{b(x,\a)\cdot x}{ |x|^2}\right\}\geq 0 ,
 \]
and this inequality holds for $|x|\geq R_o$ 
 under condition \eqref{C5}. Analogously one checks that condition \eqref{C5} implies that $W$ is a subsolution to \eqref{pucci+HJ} for $|x|\geq R_o$. 
Therefore Theorem \ref{teoHJB} and Theorem \ref{convex} give the conclusion.
\end{proof}
\begin{rem}\upshape
A 
condition that implies  \eqref{C5} and therefore the Liouville property is the following
\begin{equation*}\label{C6}
\limsup_{|x|\to\infty} \sup_{\a\in A}  b(x,\a)\cdot x <  \lambda - (N-1)\Lambda ,
\end{equation*}
because $c\geq 0$.
\end{rem}
%

\section{Uniformly elliptic operators}
\label{sect:ue}
In this section we consider fully nonlinear equations of the general form
\begin{equation}
F(x,u, Du, D^2u) =0 , \qquad\text{in } \R^N ,
\label{F}
\end{equation}
where $F :  \R^N\times \R\times  \R^N\times  S^N \to\R$ is continuous and uniformly elliptic, namely, there exist constants $0<\lambda\leq\Lambda$ such that 
\begin{equation}
\lambda tr(Q)\leq F(x,t, p, X) -  F(x,t, p, X+Q)\leq  \Lambda tr(Q),
\label{unell}
\end{equation}
for all $x, p\in\R^N$, $t\in\R$, $X, Q\in S^N$, $Q\geq 0$. Then it is well-known that
\begin{equation*}
\mathcal M^-(X) \leq F(x,t, p, X) -  F(x,t, p, 0)\leq \mathcal M^+(X) .
\end{equation*}
Now assume that 
\begin{equation}
\label{F>H}
  F(x,t, p, 0)\geq H(x,t,p)
\end{equation}
for some concave Hamiltonian of the form 
 \eqref{Hami}. Then 
  Corollary \ref{cor2} immediately gives the following.
\begin{cor}
\label{cor3}
Assume \eqref{unell} and \eqref{F>H} with $H$ given by  \eqref{Hami} and $b, c$ satisfying  \eqref{C1}, \eqref{C2}, and \eqref{C5}.
Let $u\in USC(\R^N)$ be a  subsolution of \eqref{F}
such that $\limsup_{|x|\to +\infty} \frac{u(x)}{\log|x|} \leq 0$. Assume that either $c(x,\a)\equiv 0$ or $u\geq 0$, then
$u$ is a constant.
\end{cor}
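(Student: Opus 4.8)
The plan is to reduce the fully nonlinear equation \eqref{F} directly to the Pucci framework of Corollary \ref{cor2}, since essentially all the work has already been done there. First I would observe that the uniform ellipticity \eqref{unell}, upon subtracting $F(x,t,p,0)$, gives
\[
F(x,t,p,X)\le F(x,t,p,0)+\mathcal M^+(X),
\]
and more to the point $F(x,t,p,X)\ge F(x,t,p,0)+\mathcal M^-(X)$. Combining the latter with the structural hypothesis \eqref{F>H}, any $u\in USC(\R^N)$ satisfying $F(x,u,Du,D^2u)\le 0$ in the viscosity sense also satisfies
\[
\mathcal M^-(D^2u)+H(x,u,Du)\le F(x,u,Du,D^2u)\le 0
\]
in the viscosity sense, i.e.\ $u$ is a subsolution of \eqref{pucciHJ}.

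The one technical point that deserves a line of justification is that this pointwise comparison of operators is compatible with the definition of viscosity subsolution: if $\phi$ is smooth and $u-\phi$ has a local maximum at $x_0$, then by definition $F(x_0,u(x_0),D\phi(x_0),D^2\phi(x_0))\le 0$, and since
\[
\mathcal M^-(D^2\phi(x_0))+H(x_0,u(x_0),D\phi(x_0))\le F(x_0,u(x_0),D\phi(x_0),D^2\phi(x_0))\le 0,
\]
the function $u$ is indeed a viscosity subsolution of \eqref{pucciHJ}. Here I use that \eqref{F>H} holds for the same $t=u(x_0)$ and $p=D\phi(x_0)$, and that the ellipticity inequality \eqref{unell} is an identity-type bound valid for all arguments, so no continuity or limiting argument is needed — the reduction is purely algebraic at each test point.

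Once $u$ is known to be a subsolution of \eqref{pucciHJ}, the hypotheses of Corollary \ref{cor2} are met verbatim: $b,c$ satisfy \eqref{C1}, \eqref{C2}, \eqref{C5} by assumption, the growth condition $\limsup_{|x|\to\infty} u(x)/\log|x|\le 0$ is exactly the one required, and either $c\equiv 0$ or $u\ge 0$. Applying the subsolution half of Corollary \ref{cor2} gives that $u$ is constant, which is the claim.

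There is essentially no obstacle here; the corollary is a genuine corollary. The only thing to be careful about is the direction of the inequalities — one must use the \emph{lower} Pucci bound $\mathcal M^-$ for subsolutions (so that the inequality goes the right way after adding $H\le F(\cdot,0)$), matching the fact that $\mathcal M^-$ is the concave one and appears in \eqref{pucciHJ}. If one also wanted the companion statement for supersolutions, the same argument with $\mathcal M^+$ and $\tilde H$, using $F(x,t,p,X)\le F(x,t,p,0)+\mathcal M^+(X)$ together with an assumption $F(x,t,p,0)\le\tilde H(x,t,p)$, would reduce to the supersolution half of Corollary \ref{cor2}; but as stated only the subsolution case is asserted.
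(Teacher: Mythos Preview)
Your proof is correct and follows essentially the same approach as the paper: reduce to a subsolution of \eqref{pucciHJ} via the inequality $F(x,t,p,X)\ge \mathcal M^-(X)+H(x,t,p)$ and then invoke Corollary~\ref{cor2}. The paper states this in two lines, while you have usefully spelled out the viscosity test-function step.
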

\begin{proof}
It is enough to observe that $u$ satisfies
\[
\mathcal M^-(D^2u)+H(x,u, Du)\leq 0 \quad\text{ in } \R^N
\]
and apply Corollary \ref{cor2}.
\end{proof}
The analogous statement holds for supersolutions.
\begin{cor}
\label{cor3primo}
Assume \eqref{unell} and \begin{equation*} 
  F(x,t, p, 0)\leq \tilde H(x,t, p):= \sup_{\a\in A}\{c(x,\a) t
- b(x,\a)\cdot p\} ,
\end{equation*} 
where $b,c$ satisfies  and $b, c$ satisfying  \eqref{C1}, \eqref{C2}, and \eqref{C5}.
Let $v\in LSC(\R^N)$ be a  supersolution of \eqref{F}
such that $\liminf_{|x|\to +\infty} \frac{v(x)}{\log|x|} \geq 0$. Assume that either $c(x,\a)\equiv 0$ or $v\leq 0$, then
$v$ is a constant.
\end{cor}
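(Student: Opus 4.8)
The plan is to reduce Corollary \ref{cor3primo} to Corollary \ref{cor2} in exactly the same way Corollary \ref{cor3} was reduced for subsolutions, exploiting the uniform ellipticity \eqref{unell} together with the pointwise bound on $F(x,t,p,0)$. First I would recall that \eqref{unell} yields the one-sided estimate $F(x,t,p,X)-F(x,t,p,0)\geq \mathcal M^-(X)$, so that for any supersolution $v$ of \eqref{F} we have, in the viscosity sense,
\[
0\geq F(x,v,Dv,D^2v)\geq \mathcal M^-(D^2v)+F(x,v,Dv,0)\geq \mathcal M^-(D^2v)+\tilde H(x,v,Dv)?
\]
Here one must be careful about the direction of the inequality: since $v$ is a \emph{super}solution we need a \emph{lower} bound $F(x,t,p,X)-F(x,t,p,0)\geq \mathcal M^+(X)$ is false; rather, the correct step is to use $F(x,t,p,X)-F(x,t,p,0)\leq \mathcal M^+(X)$, which combined with $F(x,v,Dv,0)\leq \tilde H(x,v,Dv)$ gives $0\geq F(x,v,Dv,D^2v)$ hence $\mathcal M^+(D^2v)+\tilde H(x,v,Dv)\geq F(x,v,Dv,D^2v)\geq 0$ only if the inequality goes the right way — so the key observation to get straight is that $v$ is a supersolution of $\mathcal M^+(D^2u)+\tilde H(x,u,Du)=0$ because $\mathcal M^+(D^2v)\geq F(x,v,Dv,D^2v)-F(x,v,Dv,0)\geq -F(x,v,Dv,0)\geq -\tilde H(x,v,Dv)$ in the viscosity sense.

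More precisely, I would argue on test functions: let $\varphi$ be smooth with $v-\varphi$ having a local minimum at $x_0$; by definition of viscosity supersolution of \eqref{F}, $F(x_0,v(x_0),D\varphi(x_0),D^2\varphi(x_0))\geq 0$. By \eqref{unell} we have $F(x_0,v(x_0),D\varphi(x_0),D^2\varphi(x_0))-F(x_0,v(x_0),D\varphi(x_0),0)\leq \mathcal M^+(D^2\varphi(x_0))$, hence $\mathcal M^+(D^2\varphi(x_0))\geq -F(x_0,v(x_0),D\varphi(x_0),0)\geq -\tilde H(x_0,v(x_0),D\varphi(x_0))$, i.e.\ $\mathcal M^+(D^2\varphi(x_0))+\tilde H(x_0,v(x_0),D\varphi(x_0))\geq 0$. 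This is exactly the statement that $v$ is a viscosity supersolution of \eqref{pucci+HJ}. Since $b,c$ satisfy \eqref{C1}, \eqref{C2}, and \eqref{C5}, and $v$ satisfies the growth condition $\liminf_{|x|\to\infty} v(x)/\log|x|\geq 0$ with either $c\equiv 0$ or $v\leq 0$, the second bullet of Corollary \ref{cor2} applies verbatim and gives that $v$ is constant.

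The only genuinely delicate point — and the one worth being careful about — is the bookkeeping of signs and inequality directions when passing from $F$ to the Pucci operators: the maximal operator $\mathcal M^+$ is the one that must appear for supersolutions (it provides an \emph{upper} bound on $F(x,t,p,X)-F(x,t,p,0)$, which after transposition yields a \emph{lower} bound usable in the supersolution inequality), mirroring how $\mathcal M^-$ appeared for subsolutions in Corollary \ref{cor3}. Everything else is a direct invocation of the already-established Corollary \ref{cor2}, so there is no substantive obstacle beyond writing the test-function computation cleanly.

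\begin{proof}
Let $\varphi\in C^2(\R^N)$ and let $x_0$ be a point of local minimum of $v-\varphi$. Since $v$ is a viscosity supersolution of \eqref{F},
\[
F(x_0,v(x_0),D\varphi(x_0),D^2\varphi(x_0))\geq 0 .
\]
By the uniform ellipticity \eqref{unell} (applied with $X=D^2\varphi(x_0)$, writing $D^2\varphi(x_0)$ as the difference of $0$ and $-D^2\varphi(x_0)$ via its positive and negative parts), one has
\[
F(x_0,v(x_0),D\varphi(x_0),D^2\varphi(x_0)) - F(x_0,v(x_0),D\varphi(x_0),0) \leq \mathcal M^+(D^2\varphi(x_0)),
\]
hence, using the assumption $F(x,t,p,0)\leq \tilde H(x,t,p)$,
\[
\mathcal M^+(D^2\varphi(x_0)) \geq - F(x_0,v(x_0),D\varphi(x_0),0) \geq - \tilde H(x_0,v(x_0),D\varphi(x_0)),
\]
that is,
\[
\mathcal M^+(D^2\varphi(x_0)) + \tilde H(x_0,v(x_0),D\varphi(x_0)) \geq 0 .
\]
Since $\varphi$ and $x_0$ were arbitrary, $v$ is a viscosity supersolution of \eqref{pucci+HJ} in $\R^N$. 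The data $b,c$ satisfy \eqref{C1}, \eqref{C2}, and \eqref{C5}, and $v$ satisfies $\liminf_{|x|\to+\infty} v(x)/\log|x|\geq 0$ with either $c(x,\a)\equiv 0$ or $v\leq 0$. Thus the second part of Corollary \ref{cor2} applies and yields that $v$ is constant.
\end{proof}
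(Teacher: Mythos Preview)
Your proof is correct and follows precisely the approach implicit in the paper: reduce to the Pucci inequality \eqref{pucci+HJ} via the upper bound $F(x,t,p,X)-F(x,t,p,0)\leq \mathcal M^+(X)$ from \eqref{unell} together with $F(x,t,p,0)\leq \tilde H(x,t,p)$, and then invoke the second bullet of Corollary \ref{cor2}. The only difference is presentational---you spell out the test-function computation, whereas the paper (as in the one-line proof of Corollary \ref{cor3}) simply asserts that $v$ is a supersolution of \eqref{pucci+HJ} and applies Corollary \ref{cor2}.
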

We specialize the last corollaries to a class of examples that are useful for comparing with the existing literature.
Assume 
\begin{equation}
\label{F>g|p|}
  F(x,t, p, 0)\geq - \ol b(x)\cdot p-g(x)|p|+\ol c(x) t
\end{equation}
where $\ol b : \R^N\to \R^N$ and $g : \R^N\to \R$ are locally Lipschitz, $\ol c : \R^N\to \R$ is continuous,  and
 \begin{equation}
\label{segno}
g\geq 0 , \qquad \ol c\geq 0 .
\end{equation}
\begin{cor}
\label{cor4}
Assume \eqref{unell}, \eqref{F>g|p|}, \eqref{segno}, and
\begin{equation}\label{C6.5}
\ol b(x)\cdot x +g(x)|x|  \leq  \ol c(x)|x|^2\log(|x|)+ \lambda
 - (N-1)\Lambda  \quad\text{ for } |x|\geq R_o .
\end{equation}
Let $u\in USC(\R^N)$ be a  subsolution of \eqref{F}
such that $\limsup_{|x|\to +\infty} \frac{u(x)}{\log|x|} \leq 0$. Assume that either $\ol c\equiv 0$ or $u\geq 0$, then
$u$ is a constant.
\end{cor}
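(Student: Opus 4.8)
The plan is to reduce Corollary~\ref{cor4} to Corollary~\ref{cor3} by exhibiting a concave Hamiltonian of the Bellman form \eqref{Hami} that is dominated by $-\ol b(x)\cdot p - g(x)|p| + \ol c(x)t$, and whose coefficients verify the structural hypotheses \eqref{C1}, \eqref{C2} and the growth bound \eqref{C5}. The point is simply that the term $-g(x)|p|$ is itself an infimum of linear functions of $p$: writing $-g(x)|p| = \inf_{|e|\le 1}\{-g(x)\,e\cdot p\}$ (or, parametrizing over the unit sphere, $\inf_{|e|=1}\{-g(x)\,e\cdot p\}$), we get
\[
-\ol b(x)\cdot p - g(x)|p| + \ol c(x)t = \inf_{|e|\le 1}\bigl\{\ol c(x)t - (\ol b(x)+g(x)e)\cdot p\bigr\},
\]
which is exactly $H(x,t,p)$ of the form \eqref{Hami} with control set $A = \overline{B}(0,1) \subset \R^N$ (a compact metric space), drift $b(x,e) := \ol b(x) + g(x)e$, and zeroth-order coefficient $c(x,e) := \ol c(x)$, independent of $e$.

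Next I would check the three conditions on this $(b,c)$. Condition \eqref{C2} is immediate since $c(x,e) = \ol c(x)\ge 0$ by \eqref{segno} and is continuous. For \eqref{C1}: on any ball $|x|\le R$ the functions $\ol b$ and $g$ are Lipschitz (hence bounded) and $\ol c$ is continuous (hence bounded), and $|b(x,e)-b(y,e)| \le |\ol b(x)-\ol b(y)| + |g(x)-g(y)|\,|e| \le (\mathrm{Lip}(\ol b)+\mathrm{Lip}(g))|x-y|$ uniformly in $e\in A$; there is no $\sigma$ to worry about here since $H$ has no second-order part. Finally for \eqref{C5}: using $|e|\le 1$ and $g\ge 0$,
\[
\sup_{e\in A}\bigl(b(x,e)\cdot x - c(x,e)|x|^2\log|x|\bigr) = \ol b(x)\cdot x + g(x)\,\sup_{|e|\le 1} e\cdot x - \ol c(x)|x|^2\log|x| = \ol b(x)\cdot x + g(x)|x| - \ol c(x)|x|^2\log|x|,
\]
and this is $\le \lambda - (N-1)\Lambda$ for $|x|\ge R_o$ precisely by hypothesis \eqref{C6.5}.

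With these verifications in place, \eqref{F>g|p|} says $F(x,t,p,0) \ge H(x,t,p)$, i.e.\ \eqref{F>H} holds, so Corollary~\ref{cor3} applies verbatim: if $u\in USC(\R^N)$ is a subsolution of \eqref{F} with $\limsup_{|x|\to\infty} u(x)/\log|x| \le 0$ and either $\ol c\equiv 0$ (equivalently $c(x,e)\equiv 0$) or $u\ge 0$, then $u$ is constant. There is essentially no obstacle here beyond making sure the representation of $-g(x)|p|$ as an infimum of linear forms is carried out with a compact metric control set so that the earlier machinery applies; the regularity bookkeeping for \eqref{C1} is routine given that $\ol b, g$ are assumed locally Lipschitz. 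One small point worth stating explicitly is that enlarging the control set from a possibly abstract $A$ to the concrete $\overline{B}(0,1)$ is harmless — all that was ever used about $A$ in Section~\ref{sect:HJB} is that it is a metric space.

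\begin{proof}
Write, for $p\in\R^N$, $-g(x)|p| = \inf_{e\in A}\{-g(x)\,e\cdot p\}$ with $A := \overline{B}(0,1)\subset\R^N$, a compact metric space. Then, by \eqref{F>g|p|},
\[
F(x,t,p,0) \ge \inf_{e\in A}\bigl\{\ol c(x)\,t - (\ol b(x)+g(x)e)\cdot p\bigr\} =: H(x,t,p),
\]
which is a concave Hamiltonian of the form \eqref{Hami} with $c(x,e) := \ol c(x)$ and $b(x,e) := \ol b(x) + g(x)e$. We check \eqref{C1}, \eqref{C2}, \eqref{C5} for this $(b,c)$. Condition \eqref{C2} holds since $c(x,e)=\ol c(x)\ge 0$ by \eqref{segno} and $\ol c$ is continuous. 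For \eqref{C1}, on $|x|\le R$ the Lipschitz functions $\ol b, g$ and the continuous function $\ol c$ are bounded, and, uniformly in $e\in A$,
\[
|b(x,e)-b(y,e)| \le |\ol b(x)-\ol b(y)| + |g(x)-g(y)| \le \bigl(\mathrm{Lip}_R(\ol b)+\mathrm{Lip}_R(g)\bigr)\,|x-y|,
\]
so \eqref{C1} holds (there is no second-order part, so no condition on $\sigma$ is needed). For \eqref{C5}, since $g\ge 0$,
\[
\sup_{e\in A}\bigl(b(x,e)\cdot x - c(x,e)|x|^2\log|x|\bigr) = \ol b(x)\cdot x + g(x)|x| - \ol c(x)|x|^2\log|x| \le \lambda - (N-1)\Lambda \quad\text{for } |x|\ge R_o,
\]
by \eqref{C6.5}. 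Thus \eqref{unell} and \eqref{F>H} hold with $H$ given by \eqref{Hami} and $b,c$ satisfying \eqref{C1}, \eqref{C2}, \eqref{C5}, and the conditions $\ol c\equiv 0$, $u\ge 0$ translate respectively into $c(x,e)\equiv 0$, $u\ge 0$. Corollary \ref{cor3} now yields that $u$ is a constant.
\end{proof}
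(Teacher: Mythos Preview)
Your proof is correct and follows essentially the same approach as the paper: both represent $-g(x)|p|$ as an infimum of linear forms to recast the right-hand side of \eqref{F>g|p|} as a Bellman Hamiltonian \eqref{Hami} with $b(x,\a)=\ol b(x)+g(x)\a$ and $c(x,\a)=\ol c(x)$, then verify \eqref{C1}, \eqref{C2}, \eqref{C5} and invoke Corollary~\ref{cor3}. The only cosmetic difference is that the paper takes $A=\{|\a|=1\}$ while you take $A=\overline{B}(0,1)$, which is immaterial.
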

\begin{proof}
We observe that $-|p|=\min_{|\a| =1} [-\a\cdot p]$, so the right hand side of \eqref{F>g|p|} can be written in the form of a concave Hamiltonian \eqref{Hami}  with $b(x,\a)= \ol b(x)+g(x)\a$, $A=\{\a\in\R^N : |\a| =1\}$, and $c(x,\a)=\ol c(x)$. Then condition \eqref{C5} becomes \eqref{C6.5}, \eqref{C1} and \eqref{C2} are satisfied,  and the conclusion follows from Corollary \ref{cor3}.
\end{proof}

Analogously we get the result for supersolutions. 
\begin{cor}
\label{cor4bis}
Assume \eqref{unell},   \[F(x,t, p, 0)\leq  -\ol b(x)\cdot p+g(x)|p|+\ol c(x) t
\]
where $\ol b, g, \ol c$ are as in Corollary \ref{cor4} and satisfy \eqref{C6.5}.
Let $v\in LSC(\R^N)$ be a  supersolution of \eqref{F}
such that $\liminf_{|x|\to +\infty} \frac{v(x)}{\log|x|} \geq 0$. Assume that either $\ol c\equiv 0$ or $v\leq 0$, then
$v$ is a constant.
\end{cor}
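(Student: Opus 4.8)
The plan is to reduce Corollary \ref{cor4bis} to Corollary \ref{cor3primo} in exactly the dual way Corollary \ref{cor4} was reduced to Corollary \ref{cor3}. First I would rewrite the zeroth-order bound on $F$ in the form required by Corollary \ref{cor3primo}, namely $F(x,t,p,0)\leq \tilde H(x,t,p)=\sup_{\a\in A}\{c(x,\a)t-b(x,\a)\cdot p\}$. The key observation is that $g(x)|p|=\max_{|\a|=1}[g(x)\,\a\cdot p]=\max_{|\a|=1}[-(-g(x)\a)\cdot p]$, so that
\[
-\ol b(x)\cdot p+g(x)|p|+\ol c(x)t=\sup_{|\a|=1}\{-(\ol b(x)-g(x)\a)\cdot p+\ol c(x)t\}.
\]
Thus I would set $A=\{\a\in\R^N:|\a|=1\}$, $b(x,\a)=\ol b(x)-g(x)\a$, and $c(x,\a)=\ol c(x)$, which puts $\tilde H$ in the required form with the supremum over the compact metric space $A$.

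Next I would check that the structural hypotheses \eqref{C1}, \eqref{C2}, \eqref{C5} hold for this choice of $b,c$. Local Lipschitz continuity and local boundedness of $\ol b$ and $g$ give \eqref{C1} for $b(x,\a)=\ol b(x)-g(x)\a$ uniformly in $|\a|=1$, and continuity together with $\ol c\geq0$ gives \eqref{C2}. For \eqref{C5} I compute
\[
\sup_{|\a|=1}\{b(x,\a)\cdot x-c(x,\a)|x|^2\log|x|\}=\ol b(x)\cdot x+g(x)|x|-\ol c(x)|x|^2\log|x|,
\]
using $\sup_{|\a|=1}(-g(x)\a\cdot x)=g(x)|x|$ since $g\geq0$; by \eqref{C6.5} this is $\leq\lambda-(N-1)\Lambda$ for $|x|\geq R_o$, which is precisely \eqref{C5}.

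With these verifications in place, Corollary \ref{cor3primo} applies directly: any $v\in LSC(\R^N)$ that is a supersolution of \eqref{F} with $\liminf_{|x|\to\infty} v(x)/\log|x|\geq0$ and with either $\ol c\equiv0$ (equivalently $c(x,\a)\equiv0$) or $v\leq0$ must be constant. There is essentially no obstacle here; the only point requiring a moment's care is the sign bookkeeping in the two suprema — making sure that the $+g(x)|p|$ term (rather than $-g(x)|p|$) is what produces a \emph{supremum} representation, which is why the roles of $\mathcal M^+$ and $\tilde H$ pair up correctly, and that $g\geq0$ is used to evaluate $\sup_{|\a|=1}(-g(x)\a\cdot x)=g(x)|x|$ rather than obtaining $-g(x)|x|$. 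Everything else is a transcription of the proof of Corollary \ref{cor4} with inequalities, infima and the extremal operator reversed.
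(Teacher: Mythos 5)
Your proof is correct and follows exactly the route the paper intends: the paper merely states ``Analogously we get the result for supersolutions'' after Corollary \ref{cor4}, and your argument — rewriting $g(x)|p|=\max_{|\a|=1}g(x)\,\a\cdot p$, setting $b(x,\a)=\ol b(x)-g(x)\a$, $c(x,\a)=\ol c(x)$, checking \eqref{C1}, \eqref{C2}, \eqref{C5}, and invoking Corollary \ref{cor3primo} — is precisely that dualization. (A minor observation: since $\a$ ranges over the full unit sphere, $b(x,\a)=\ol b(x)+g(x)\a$ produces the same $\tilde H$ and the same verification of \eqref{C5}, so the sign in $-g(x)\a$ is immaterial; your extra caution there, while harmless, was unnecessary.)
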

\begin{rem}
\label{rem:CDC}
\upshape

A  set of sufficient conditions for  \eqref{C6.5}, containing the case that  $\ol b$ is the drift of an Ornstein-Uhlenbeck process, is the following:
there exist $\delta\geq 0, \gamma >0$, such that
\[
\ol  b(x)\cdot x = -\gamma |x|^\delta + o(|x|^\delta) , \quad g(x) = o(|x|^{\delta-1}) , \text{ as } |x|\to\infty .
\] 

Finally,  \eqref{C6.5} holds also if, instead,
\[
\liminf_{|x|\to\infty} \ol c(x)>0 , \quad |\ol b(x)|, g(x) = o(|x|\log (|x|)) , \text{ as } |x|\to\infty .
\]
\end{rem}
\section{Quasilinear hypoelliptic operators}
\label{sect:hyp}
In this section we consider equations of the form
\begin{equation}
\label{QL}
-tr (a(x) D^2u) + \inf_{\a\in A} \{-b(x,\a)\cdot Du + c(x,\a) u\}=0 , \qquad\text{ in } \R^N , 
\end{equation}
\begin{equation}
\label{QL+}
-tr (a(x) D^2u) + \sup_{\a\in A} \{-b(x,\a)\cdot Du +c(x,\a) u\}=0 , \qquad\text{ in } \R^N , 
\end{equation}
where $a(x)=\sigma(x)\sigma^T(x)$ for some locally Lipschitz $N\times m$ matrix $\sigma=(\sigma_{ij})$ and the coefficients $b, c$ satisfy the conditions \eqref{C1} and \eqref{C2}. 

Instead of 
the uniform ellipticity \eqref{C3} we assume first
\begin{equation}
\label{nondeg}
\forall\, R>0 \quad \text{either } \inf_{|x|\leq R, \a\in A}c(x,\a)>0 \quad 
\text { or } \quad\exists i :  \inf_{|x|\leq R} \sum_{j=1}^m \sigma_{ij}^2(x)>0 ,
\end{equation}
which will ensure the Comparison Principle on bounded sets. 

Sufficient conditions for the Strong Maximum Principle can be given by means of \emph{subunit vector fields} $\tau$ for the matrix $a$, namely, $\tau : \R^N\to\R^N$ such that $\xi^T a(x) \xi\geq (\tau(x)\cdot\xi)^2$ for all $\xi\in\R^N$. Of course each column of $\sigma$ is a subunit vector field, but also $\eta a_j$, where $a_j$ is the $j$-th column of the matrix $a$ and $\eta>0$ is small enough, see, e.g., \cite{BDL}. The second assumption will be
\begin{
multline}
\label{Hor}
\qquad\text{ there exist subunit vector fields } \tau_j ,\; j=1,\dots,n, \text{ of class } C^\infty
\\ \text{and generating a Lie algebra of full rank $N$ at each point } x\in\R^N. \end{
multline}
This classical condition of H\"ormander can be weakened: see Remark \ref{rem:Hor} after the next result.
\begin {cor}\label{corQL}
Let the previous assumptions and
\begin{equation}
\label{C4QL} 
|\sigma(x)|^2 +\sup_{\a\in A} (b(x,\a)\cdot x - c(x,\a)|x|^2/2) \leq 0 \quad\text{ for } |x|\geq R_o ,
\end{equation} hold. 
\begin{itemize}
\item  Let $u\in USC(\R^N)$ be a  subsolution of \eqref{QL}
such that $\limsup_{|x|\to +\infty} \frac{u(x)}{|x|^2} \leq 0$. 
If either $c(x,\a)\equiv 0$ or $u\geq 0$, then
$u$ is a constant.
\item  Let $v\in LSC(\R^N)$ be a  supersolution of \eqref{QL+}
such that $\liminf_{|x|\to +\infty} \frac{v(x)}{|x|^2} \geq 0$. 
If either $c(x,\a)\equiv 0$ or $v\leq 0$, then
$v$ is a constant.
\end{itemize} 
\end{cor}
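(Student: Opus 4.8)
The plan is to reduce Corollary \ref{corQL} to the general HJB results of Section \ref{sect:HJB}, namely Theorem \ref{teoHJB} for the subsolution statement and Theorem \ref{convex} for the supersolution statement, by verifying their hypotheses \eqref{CP}, \eqref{SMP} (resp. $(2')$), and \eqref{w} (resp. \eqref{wconv}). First I would set $L^\a u = tr(a(x)D^2u) + b(x,\a)\cdot Du$ and observe that \eqref{QL} is exactly $G[u]=0$ with $G$ as in \eqref{HJB}, while \eqref{QL+} is $\tilde G[u]=0$ with $\tilde G$ as in \eqref{HJB2}; note that the diffusion $a$ here does not depend on $\a$, which only simplifies matters.

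Next I would check the three assumptions. For \eqref{CP}: the coefficients satisfy \eqref{C1} and \eqref{C2} by hypothesis, and although \eqref{C3} fails, the nondegeneracy condition \eqref{nondeg} guarantees that on any bounded set either $c$ is bounded below by a positive constant or one of the rows of $\sigma$ is bounded away from zero, which is a known sufficient condition for the Comparison Principle on bounded sets for quasilinear equations of this type (cite \cite{J} or \cite{BM} as in Corollary \ref{cor1}, together with the subunit/hypoellipticity structure). For \eqref{SMP} and $(2')$: these follow from the results of \cite{BDL1, BDL} on the Strong Maximum/Minimum Principle for degenerate operators, using the H\"ormander condition \eqref{Hor} on the smooth subunit vector fields $\tau_j$ generating a Lie algebra of full rank — this is precisely the hypoelliptic case alluded to in the introduction. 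Continuity of $F$ is immediate from \eqref{C1}, \eqref{C2} and local Lipschitz continuity of $\sigma$.

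The remaining point is the Lyapunov condition. I would take $w(x)=|x|^2/2$ for the subsolution case and $W(x)=-|x|^2/2$ for the supersolution case, exactly as in Corollary \ref{cor1}. Computing, $L^\a w = tr\,a(x) + b(x,\a)\cdot x = |\sigma(x)|^2 + b(x,\a)\cdot x$, so $G[w] = \inf_{\a\in A}\{-L^\a w + c(x,\a)|x|^2/2\} = \inf_{\a\in A}\{-|\sigma(x)|^2 - b(x,\a)\cdot x + c(x,\a)|x|^2/2\} = -\sup_{\a\in A}\{|\sigma(x)|^2 + b(x,\a)\cdot x - c(x,\a)|x|^2/2\}$, which is $\geq 0$ for $|x|\geq R_o$ precisely by \eqref{C4QL}; together with $w(x)\to+\infty$ this gives \eqref{w}. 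The same computation with $W=-|x|^2/2$ yields $\tilde G[W]\leq 0$ for $|x|\geq R_o$ and $W\to-\infty$, which is \eqref{wconv}. The growth conditions $\limsup u(x)/|x|^2\leq 0$ and $\liminf v(x)/|x|^2\geq 0$ are exactly \eqref{upperbound} and \eqref{nuova} with this choice of $w$, $W$. Then Theorem \ref{teoHJB} gives that $u$ is constant (under $u\geq 0$ or $c\equiv 0$) and Theorem \ref{convex} gives that $v$ is constant (under $v\leq 0$ or $c\equiv 0$), which completes the proof.

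The main obstacle, as in the introduction's remark that this appears to be the first Liouville result for subelliptic inequalities involving $Du$, is the verification that the Strong Maximum/Minimum Principle \eqref{SMP}, $(2')$ actually holds under the H\"ormander hypoellipticity assumption \eqref{Hor} rather than under uniform ellipticity — everything else is a routine transcription of the argument behind Corollary \ref{cor1}. Here I would rely on the propagation-of-maxima results of \cite{BDL1, BDL}: a subsolution attaining an interior nonnegative maximum is constant along the integral curves of each subunit field $\tau_j$, and the full-rank bracket-generating condition then forces it to be constant on all of $\R^N$ by the Chow–Rashevskii connectivity of the associated sub-Riemannian structure. One should also double-check that \eqref{nondeg}, rather than \eqref{C3}, still suffices for \eqref{CP} in this quasilinear setting; this is standard but worth citing carefully.
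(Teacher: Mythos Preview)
Your proposal is correct and follows essentially the same route as the paper's proof: verify \eqref{CP} via \eqref{nondeg} (the paper cites \cite{CIL} for the $\inf c>0$ case and Corollary~4.1 of \cite{BM} for the row-nondegeneracy case, which sharpens your citation), verify \eqref{SMP}/$(2')$ via the H\"ormander condition \eqref{Hor} and \cite{BDL}, then take $w=|x|^2/2$, $W=-|x|^2/2$, use $tr\,a=|\sigma|^2$ to see that \eqref{C4QL} is \eqref{C4}, and apply Theorems~\ref{teoHJB} and~\ref{convex}. Your additional remarks on Chow--Rashevskii connectivity for the Strong Maximum Principle are more explicit than the paper's bare citation of \cite{BDL}, but the argument is the same.
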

\begin{proof}
The Comparison Principle in bounded sets under the first condition in \eqref{nondeg} is standard \cite{CIL}, 
whereas under the second condition it is Corollary 4.1 in  \cite{BM}.

The assumption \eqref{Hor} implies the Strong Maximum Principle and the Strong Minimum Principle for both equations \eqref{QL} and \eqref{QL+} by the results of \cite{BDL}.

Finally, it is easy to see that $tr\, a(x) = |\sigma(x)|^2$, where $|\sigma|$ denotes the Euclidean norm of the matrix $\sigma$. Then \eqref{C4QL} is equivalent to  \eqref{C4} and so $w(x) =|x|^2/2$ is a supersolution of \eqref{QL} as in the  proof of
 Corollary \ref{cor1}, whereas $W(x)=-|x|^2/2$ is a subsolution to \eqref{QL+}. 
Thus Theorem \ref{teoHJB} and Theorem \ref{convex}
  give the conclusions.
\end{proof}
\begin{rem}
\upshape
The second condition in  \eqref{nondeg} is satisfied in many classical examples of subelliptic operators, e.g., if the columns of the matrix $\sigma$ are the  generators of a Carnot group, see \cite{BLU}. It can be further relaxed to $\inf_{|x|\leq R} \sum_{i=1}^N \sum_{j=1}^m \sigma_{ij}^2(x)>0$ provided that there exist vector fields $\tilde b : \R^N\times A\to \R^m$ such that $b(x,\a)=\sigma(x) \tilde b(x,\a)$, see Corollary 4.1 in  \cite{BM}.
\end{rem}
\begin{rem}
\label{rem:Hor} \upshape
The general sufficient condition for the Strong Maximum Principle originating in Bony's work and extended to nonlinear operators in \cite{BDL} is the following.
Suppose there exist Lipschitz continuous subunit vector fields $\tau_j ,\; j=1,\dots,n$, and consider the control system
\[
\dot y(t)=\sum_{j=1}^n \beta_j(t) \tau_j(y(t)) ,
\]
where the control $\beta_j$ take values in a compact neighborhood $B$ of the origin. Assume that each $x\in\R^N$ has a neighborhhod such that all points can be reached by a trajectory of the system starting at $x$, i.e., there exists $r>0$ such that for all $z$ with $|z-x|<r$ there are measurable $\beta_j : [0,+\infty)\to B$ for which the solution of the system with $y(0)=x$ satisfies $y(t)=z$ for some $t>0$. Then the strong maximum and minimum principles hold for \eqref{QL} and \eqref{QL+}. The H\"ormander condition \eqref{Hor} is sufficient for this reachability property but not necessary. In particular, the smoothness of the vector fields can be relaxed.
\end{rem}
\begin{rem}
\upshape
Fully nonlinear HJB equations involving hypoelliptic operators $L^\a$ can also be considered. Sufficient conditions for the Strong Maximum Principle are given in 
\cite{BDL}, but they are not as explicit as \eqref{Hor} or the condition described in the preceding Remark \ref{rem:Hor}. They still concern a reachability property of a control system, but instead of a deterministic one it is either a 
diffusion process or a deterministic differential game, and therefore the formulation of such conditions is more technical.
\end{rem}
\section{Large-time stabilization in parabolic equations}
\label{sect:lts}
We consider   the operators with continuous coefficients \eqref{HJB} and  \eqref{HJB2} 
introduced in Section \ref{sect:HJB}.
For   functions $u : [0, +\infty)\times \R^N \to \R$ we denote with $Du=D_x u$ and $D^2u=D^2_x u$ the first and second partial derivatives of $u$ with respect to the space variables.
\begin{cor} 
\label{cor:evol}
Assume $G$ satisfies the conditions \eqref{CP},  \eqref{SMP},  \eqref{w}. If  $u\in USC([0, +\infty)\times \R^N)$ satisfies 
$$
u_t + G[u]\leq 0    \qquad \text{ in }  \R^N\times (0, +\infty),
$$ 
\[\limsup_{|x|\to +\infty} \frac{u(t,x)}{w(x)} \leq 0 \qquad \text{ uniformly in }t\in [0, +\infty),\]
and either $c\equiv 0$ or $u\geq 0$,
then 
$$
\limsup_{t\to +\infty, y\to x} u(t,y) =:\overline u(x)
$$
 is constant with respect to $x$.			
\end{cor}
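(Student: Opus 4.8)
\textbf{Proof proposal for Corollary \ref{cor:evol}.}

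The plan is to reduce the parabolic statement to the elliptic Liouville property of Theorem \ref{teoHJB} applied to the function $\overline u$, via the relaxed half-limit (semicontinuous envelope at infinite time). First I would recall the standard fact from the stability theory of viscosity solutions: if $u$ is a subsolution of the parabolic equation $u_t + G[u]\le 0$ and $u$ is bounded above (locally uniformly in $x$, uniformly in $t$), then the relaxed upper limit
\[
\overline u(x):=\limsup_{t\to+\infty,\ y\to x} u(t,y)
\]
is a viscosity subsolution of the stationary equation $G[\overline u]\le 0$ in $\R^N$. This is the classical "limit as $t\to\infty$" argument: given a smooth $\phi$ touching $\overline u$ from above at $x_0$ with a strict local maximum, one produces points $(t_n,y_n)$ with $t_n\to\infty$, $y_n\to x_0$, $u(t_n,y_n)\to\overline u(x_0)$, and $u(t,y)-\phi(y)$ has a local max in $(t,y)$ near $(t_n,y_n)$ at some interior point; since $\phi$ does not depend on $t$, the time-derivative term at that point is $\le 0$, so the subsolution inequality for $u$ yields $\phi_t + G[\phi]\le 0$ there, i.e. $G[\phi(x_n)]\le -\phi_t \le$ something tending to the right sign, and passing to the limit gives $G[\phi(x_0)]\le 0$. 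One must check $\overline u$ is finite and USC: finiteness follows because $u$ is bounded above on bounded sets (either $u\ge 0$ forces nothing from above, but the growth hypothesis plus local boundedness of subsolutions of a uniformly-treated equation — or simply comparison with $C+\eta w$ on large balls as in Theorem \ref{teoHJB} — gives a uniform-in-$t$ upper bound on compacts); USC of a relaxed upper limit is automatic.

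Next I would verify the hypotheses of Theorem \ref{teoHJB} for $\overline u$. The growth condition $\limsup_{|x|\to\infty}\overline u(x)/w(x)\le 0$ is inherited directly from the assumed bound $\limsup_{|x|\to\infty} u(t,x)/w(x)\le 0$ uniformly in $t$: for every $\eps>0$ there is $R$ with $u(t,x)\le \eps w(x)$ for $|x|\ge R$ and all $t$ (using also $w\to+\infty$ so $w>0$ for $|x|$ large), hence $\overline u(x)\le \eps w(x)$ there, giving the claim. Likewise the sign condition is preserved: if $u\ge 0$ then $\overline u\ge 0$, and if $c\equiv 0$ the case distinction in Theorem \ref{teoHJB} for $c\equiv 0$ applies. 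Since $G$ satisfies \eqref{CP}, \eqref{SMP}, \eqref{w} by assumption, Theorem \ref{teoHJB} applies to $\overline u$ and yields that $\overline u$ is constant, which is exactly the conclusion.

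I expect the only real obstacle to be the rigorous justification of the first step — that $\overline u$ is a (finite, USC) viscosity subsolution of the stationary equation — in the degenerate-elliptic setting with a possibly discontinuous-at-first-glance $\limsup$. The subtle points are: (i) showing $\overline u(x)<+\infty$ for every $x$, for which I would argue on a ball $B_M$ as in Step 4 of Theorem \ref{teoHJB}, comparing $u(t,\cdot)$ (as a subsolution of $u_t+G[u]\le0$, hence also a subsolution of $G[u]\le 0$ after the time-limit argument, or more elementarily via a parabolic comparison on $[0,T]\times B_M$ against the time-independent supersolution $C_\eta+\eta w$) to get a bound independent of $t$; and (ii) the diagonal extraction of touching points at times $t_n\to\infty$, where one typically adds a small perturbation like $-1/t$ to the test function or localizes in a compact time-space window to force the maximum to be attained. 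Both are standard (see the long-time behavior arguments in the references cited, e.g. \cite{bs, IS, CFP}), so I would carry them out concisely and then invoke Theorem \ref{teoHJB}.
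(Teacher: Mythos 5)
Your overall strategy is the same as the paper's: show that the relaxed upper limit $\overline u$ is a viscosity subsolution of the stationary inequality $G[\overline u]\le 0$, check that the growth and sign hypotheses of Theorem \ref{teoHJB} pass to the limit, and invoke that theorem. Where you differ is in how the reduction to the stationary problem is implemented. The paper introduces the time rescaling $v_\eta(t,x):=u(t/\eta,x)$, which is a subsolution of $\eta\,\partial_t v_\eta + G[v_\eta]\le 0$; the Barles--Perthame half-relaxed upper limit $\overline v(t,x):=\limsup_{\eta\to 0,\,s\to t,\,y\to x}v_\eta(s,y)$ then satisfies $G[\overline v]\le 0$ by the standard stability theorem (the $\eta\,\partial_t$ term simply disappears), and since $\overline v(t,x)=\overline u(x)$ for every $t>0$ the conclusion is immediate. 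You instead propose a hands-on touching-point argument with $t_n\to\infty$, a small perturbation like $-1/t$, and localization in compact space-time windows. That route can be made to work and is essentially what the rescaling encodes, but it requires more care than your sketch gives: you write that at the touching point ``the time-derivative term is $\le 0$'', whereas for a $t$-independent test function an interior maximum gives $\phi_t=0$ exactly, and if the maximum is attained on a time-boundary of the window one must use the one-sided time sub-jet in the parabolic definition. The rescaling device sidesteps all of this in one stroke. One point you raise that the paper glosses over, and which is worth keeping in mind, is that the stability theorem requires $\overline v$ to be finite-valued and USC; your proposed comparison of $u(t,\cdot)$ with $C_\eta+\eta w$ on large balls, uniformly in $t$, is a reasonable way to justify this when it is not assumed outright.
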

\begin{proof}
Consider the rescaled function $v_\eta(t,x):=u(t/\eta,x)$ and note that it is a subsolution of 
$$
\eta \frac{\de v_\eta}{\de t} + G[v_\eta]\leq 0    \qquad \text{ in }  \R^N\times (0, +\infty) .
$$ 
By the stability of viscosity subsolutions, the function
$$
\overline v(t,x) := \limsup_{\eta\to 0, s\to t, y\to x} v_\eta(s,y) 
$$
is a subsolution of $ G[\overline v]\leq 0 $ 
in $  \R^N\times (0, +\infty)$. 
On the other hand, by the very definitions, $\overline v(t,x)= \overline u(x)$ for any $t>0$. Moreover, it is easy to check that 
$\limsup_{|x|\to +\infty} \frac{\overline{ u}(x)}{w(x)} \leq 0$ and that, if $u\geq 0$,   also $\overline u\geq 0$. 
Then
\[
 G[\overline u]\leq 0  \qquad \text{ in }  \R^N ,
\]
and we can use Theorem \ref{teoHJB} to conclude that $\overline u$ is a constant.
\end{proof}

\begin{rem}
\label{rem:evol} 
\upshape It is immediate to prove the analogous for supersolution. Assume $\tilde G$ satisfies \eqref{CP}, $(2')$ and \eqref{wconv}. 
Let $v$ be a  LSC supersolutions of $
u_t + \tilde G[u]\geq 0$,  such that $\liminf_{|x|\to +\infty} \frac{v(t,x)}{W(x)} \leq 0$ uniformly in $t$. Assume moreover that either $c\equiv 0$ or $v\leq 0$. Then 
 $\liminf_{t\to +\infty, y\to x} u(t,y)$ is a constant. \end{rem} 
Now we consider the Cauchy problem
\begin{equation}\label{Cauchy}
\left\{
\begin{array}{ll}
u_t + F(x,
 Du, D^2u)=0 \quad\ & \text{ in  }(0, +\infty)\times \R^N \\
u(0,x) = h(x)  \quad &   \text{ in  }\R^n ,
\end{array}
\right.\,
\end{equation}
where $F$ is uniformly elliptic and satisfies 
\begin{equation}
\label{Fg|p|}
 - b_1(x)\cdot p-g_1(x)|p| 
  \leq  F(x,
   p, 0)\leq  - b_2(x)\cdot p +g_2(x)|p| 
\end{equation}
with $b_i : \R^N\to \R^N$ and $g_i : \R^N\to \R$ bounded and locally Lipschitz, $i=1,2$. 
%
\begin {thm}
\label{longt}
Assume $F$ satisfies the structural conditions for the comparison principle bewteen a sub- and a supersolution of \eqref{Cauchy}, as well as \eqref{unell},  \eqref{Fg|p|} with $g_i\geq 0$  and 
\begin{equation}\label{C10}
b_i(x)\cdot x +g_i(x)|x|  \leq  
\lambda - (N-1)\Lambda  \quad\text{ for } |x|\geq R_o , i=1,2 .
\end{equation}
Suppose also that $h \in BUC(\R^N)$. 
Then there exist a unique solution  $u$  of  \eqref{Cauchy} H\"older continuous in 
$[0, +\infty)\times \R^N$ and constants $\ol u, \ul u \in\R$ such that
\begin{equation}
\label{lt_limits}
\limsup_{t\to +\infty} u(t,x) = \overline u , \quad \liminf_{t\to +\infty} u(t,x) =\ul u , \quad \text{ for all } x\in \R^N.
\end{equation}
In particular, if for some $\ol x$ the limit $\lim_{t\to +\infty} u(t,\ol x)$ exists, then $\lim_{t\to +\infty} u(t, x)$ exists for all $x$, it is independent of $x$, and locally uniform.
\end{thm}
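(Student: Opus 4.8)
\textbf{Proof plan for Theorem \ref{longt}.}
The plan is to combine three ingredients: (i) existence, uniqueness, and H\"older regularity of the solution to \eqref{Cauchy}, (ii) the large-time stabilization in space coming from Corollary \ref{cor:evol} and Remark \ref{rem:evol}, and (iii) the observation that $u(t,x)$ is monotone in $t$ up to lower-order terms, which forces $\ol u$ and $\ul u$ to be genuine limits rather than merely $\limsup/\liminf$. For existence and uniqueness I would invoke the assumed comparison principle for \eqref{Cauchy} together with Perron's method, using as barriers functions of the form $h(x)\pm C t$ (or, to be safe with the unbounded drift, $\pm(C_1 w(x) + C_2 t)$ with $w$ the quadratic or logarithmic Lyapunov function); boundedness of $h$ and the structure \eqref{Fg|p|} give a bound $\|u(t,\cdot)\|_\infty \le \|h\|_\infty$ since constants are solutions of the stationary equation when there is no zeroth-order term. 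H\"older continuity in space, locally uniformly in $t$, then follows from the Krylov--Safonov estimates for uniformly elliptic equations (as cited in the introduction for the ergodic problem), and H\"older continuity in time from the standard argument comparing $u(t+\tau,\cdot)$ with $u(t,\cdot)$ shifted by a modulus, using the spatial equicontinuity and the comparison principle.

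Next I would set $\ol u(x):=\limsup_{t\to\infty, y\to x} u(t,y)$ and $\ul u(x):=\liminf_{t\to\infty, y\to x} u(t,y)$. From \eqref{Fg|p|} and \eqref{unell} one has
\[
\mathcal M^-(D^2 u) - b_1(x)\cdot Du - g_1(x)|Du| \le -u_t \le \mathcal M^+(D^2 u) - b_2(x)\cdot Du + g_2(x)|Du|,
\]
so $u$ is a subsolution of $u_t + G[u] \le 0$ with $G$ the concave HJB operator having $c\equiv 0$, coefficients $b(x,\a)=b_1(x)+g_1(x)\a$, $a(x,\a)=$ a matrix realizing $\mathcal M^-$, and $A=S^{N-1}\times\{M:\lambda I\le M\le\Lambda I\}$; condition \eqref{C10} for $i=1$ is exactly \eqref{C5} for this operator (with $c\equiv0$), so Corollary \ref{cor2}/Corollary \ref{cor4} supply the Lyapunov function $w(x)=\log|x|$ and the hypotheses \eqref{CP}, \eqref{SMP}, \eqref{w} of Corollary \ref{cor:evol}. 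Since $\|u(t,\cdot)\|_\infty\le\|h\|_\infty$, the growth condition $\limsup_{|x|\to\infty} u(t,x)/\log|x|\le 0$ holds uniformly in $t$; hence Corollary \ref{cor:evol} gives that $\ol u$ is constant. Symmetrically, $u$ is a supersolution of $u_t+\tilde G[u]\ge 0$ with $\tilde G$ the convex operator built from $b_2,g_2$, condition \eqref{C10} for $i=2$ gives \eqref{wconv} via $W(x)=-\log|x|$, and Remark \ref{rem:evol} yields that $\ul u$ is constant. This establishes \eqref{lt_limits}.

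For the final assertion, suppose $\lim_{t\to\infty} u(t,\ol x)$ exists for some $\ol x$; then $\ol u=\ul u=:c_\infty$ by \eqref{lt_limits}, so $\limsup_{t\to\infty}u(t,x)=\liminf_{t\to\infty}u(t,x)=c_\infty$ for every $x$, i.e. $\lim_{t\to\infty}u(t,x)=c_\infty$ for all $x$, independent of $x$. Local uniformity of the convergence follows from the local-in-$x$, uniform-in-$t$ equicontinuity established in step (i): given a compact set $K$ and $\eps>0$, cover $K$ by finitely many balls of radius $\rho$ on which $|u(t,x)-u(t,x_k)|\le\eps/2$ for all $t$, and pick $T$ so that $|u(t,x_k)-c_\infty|\le\eps/2$ for $t\ge T$ and all the finitely many centers $x_k$.

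\textbf{Main obstacle.} The delicate point is step (i): producing a solution of \eqref{Cauchy} that is simultaneously H\"older continuous \emph{up to} $t=0$ and globally bounded, while the first-order coefficients $b_i$ are allowed only to be bounded (so the Lyapunov function is needed for the Liouville step but not for well-posedness). The boundedness of $u$ is immediate from comparison with the constants $\pm\|h\|_\infty$, but the passage from interior Krylov--Safonov estimates to equicontinuity including the initial layer requires care — one controls it by comparing $u$ with barriers of the form $h(x_0)+\omega(|x-x_0|)+Ct$ adapted to the modulus $\omega$ of $h$, exploiting that $F$ is 1-homogeneous in $(p,X)$ and uniformly elliptic. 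Once this regularity package is in hand, the rest of the argument is a direct application of the Liouville-type corollaries proved above.
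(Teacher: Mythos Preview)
Your overall strategy matches the paper's: show that the relaxed half-limits $\ol u(x):=\limsup_{t\to\infty,\, y\to x} u(t,y)$ and $\ul u(x):=\liminf_{t\to\infty,\, y\to x} u(t,y)$ are constants via Corollary~\ref{cor:evol} and Remark~\ref{rem:evol} (invoking Corollaries~\ref{cor4} and \ref{cor4bis} under \eqref{C10}), and then use spatial equicontinuity uniform for large $t$ to identify these with the pointwise $\limsup/\liminf$ in \eqref{lt_limits}. Your item (iii) about monotonicity in $t$ is a stray thought---it is neither needed nor true in general (cf.\ Remark~\ref{stab:calore}); drop it.

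The substantive difference, and a gap in your plan as written, is in how the equicontinuity is obtained. Applying the \emph{elliptic} Krylov--Safonov estimate to $u(t,\cdot)$ at fixed $t$ requires a bound on the right-hand side, which here is $u_t$; but your proposed argument for time-regularity via comparison presupposes the very spatial equicontinuity you are trying to prove, so the two steps are circular. The paper breaks this circle by first taking $h$ smooth with bounded first and second derivatives: then comparison gives $|u(t+s,x)-u(t,x)|\le Cs$ with $C:=\sup_x|F(x,Dh,D^2h)|$, hence $|u_t|\le C$ in the viscosity sense, and now the elliptic Krylov--Safonov estimate of \cite{Tru88} applies at each fixed $t$ with constants depending only on $N,\lambda,\Lambda,C,\|h\|_\infty$ and the bounds on $b_i,g_i$. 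For general $h\in BUC(\R^N)$ one mollifies to get smooth $h_k\to h$ uniformly, uses comparison to obtain $u_k\to u$ uniformly on $[0,+\infty)\times\R^N$, and transfers the constants $\ol u_k,\ul u_k$ to $u$ by a short $\eps$-argument. This sidesteps your ``main obstacle'' entirely; no barrier construction at the initial layer is needed. (If you prefer a direct route, use the \emph{parabolic} Krylov--Safonov estimate to get interior-in-$t$ H\"older regularity uniformly for $t\ge 1$, which is all that the identification behind \eqref{lt_limits} requires.)
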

\begin{proof}
We divide the proof in three steps. 

{\bf Step 1} . We show that  \[\limsup_{t\to +\infty, y\to x} u(t,y)=\overline u(x),\qquad\liminf_{t\to +\infty, y\to x} u(t,y)=\underline{u}(x)\] are constants, that is $\ol u(x)\equiv \ol u$ and $\underline u(x)\equiv\underline u$ for every $x$. 

The existence and uniqueness of a solution $u \in BUC([0, +\infty)\times \R^N)$ for all $T>0$ follows from the comparison principle and Perron's method by standard theory. We must prove global regularity estimates.
We will use several times that, by \eqref{unell} and  \eqref{Fg|p|},
\begin{equation}
\label{bounds_on_F}
\mathcal M^-(X) - b_1(x)\cdot p-g_1(x)|p|   \leq  F(x, p, X)\leq  \mathcal M^+(X)- b_2(x)\cdot p +g_2(x)|p| .
\end{equation}
First observe that it implies that any constant solves the PDE; consequently, we
have the bounds
$\inf h\leq u(t,x) \leq \sup h$, for every $x,t$,  by the Comparison Principle.

Arguing as in  Corollary \ref{cor:evol}. we get that $\limsup_{t\to +\infty, y\to x} u(t,y)$ is  a subsolution to
\[\mathcal{M}^- (D^2u)-b_1(x)\cdot Du-g_1(x)|Du|\leq 0 \] and  $\liminf_{t\to +\infty, y\to x} u(t,y)$  is a supersolution to \[\mathcal{M}^+(D^2u)-b_2(x)\cdot Du+g_2(x)|Du|\geq 0.\]
Note that condition \eqref{C10}  coincides with \eqref{C6.5}, then we can apply  Corollary \ref{cor4} and Corollary \ref{cor4bis} and  conclude that 
\[
\limsup_{t\to +\infty, y\to x} u(t,y)=\overline u(x),\qquad\liminf_{t\to +\infty, y\to x} u(t,y)=\underline{u}(x)
\]
 are constants, that is $\ol u(x)\equiv \ol u$ and $\underline u(x)\equiv\underline u$
for every $x$. 

{\bf Step 2}. We show that if $h$ is smooth  with bounded first and second derivatives, then the conclusion holds. 

We apply the theory of uniformly elliptic equations for $t$ fixed.
 From the comparison principle we get the estimate
$$
|u(t,x)-h(x)|\le
Ct 
\qquad\text{ on } [0, +\infty)\times \R^N,
$$
 for the constant
$C:=\sup_{x}|F(x,D h,D^2 h)|.$
  By applying again the
comparison principle we obtain 
$$|u(t+s,x)-u(t,x)|\le
\sup_{x\in\R^N}|u(s,x)-h(x)|\le Cs \qquad\text{ on }
[0,+\infty)\times\R^N
$$
 for all  $s>0$.
In particular, we have $|
u_t|\le C$ in the viscosity sense. From this, \eqref{bounds_on_F}, and the boundedness of $b_i, g_i$, it is easy to deduce that
the partial function $u(t,\cdot)$ satisfies  for all $t>0$
\begin{equation}\label{subsol}
\mathcal M^-(D^2u)
- C_1|{Du}|
 \leq C , \quad \mbox{in }\R^N ,
\end{equation}
\begin{equation}\label{supersol}
\mathcal M^+(D^2u)+ C_1|{Du}| \geq -C , \quad \mbox{in }\R^N 
\end{equation}
 in the viscosity sense.
Then we can apply the  estimates of Krylov-Safonov type as stated 
in Thm. 5.1 of \cite{Tru88}. 
By \eqref{subsol} $u(t,\cdot)$ satisfies a local maximum principle with constants depending only on $N, \lambda, \Lambda, C, C_1,$ and  $\|h\|_\infty$, whereas by \eqref{supersol} $u(t,\cdot)$ satisfies a weak Harnack inequality with constants depending only on the same quantities.
The combination of these two estimates with the classical Moser iteration technique 
implies that the family $u(t,\cdot)$ is equi-H\"older continuous. 
Since $u$ is Lipschitz continuous in $t$, we conclude that it is H\"older 
 continuous in $[0, +\infty)\times \R^N$. This implies that $\limsup_{t\to +\infty} u(t,x)=\limsup_{t\to +\infty, y\to x} u(t,y) $ and
 $\liminf_{t\to +\infty} u(t,x)=\liminf_{t\to +\infty, y\to x} u(t,y)$.
 So, by Step 1, $\limsup_{t\to +\infty} u(t,x)=\overline u$, and $\liminf_{t\to +\infty} u(t,x)=\underline{u}$ for every $x\in\R^N$. 
 
%
%

{\bf Step 3}. We conclude  for general $h\in BUC(\R^N)$. 

We mollify $h$ and take a sequence of smooth functions  $(h_k)$ with bounded first and second derivatives
 converging uniformly to $h$.  The comparison principle implies
 that the associated sequence of solutions $(u_k)$ converges uniformly to $u$
 on $[0, +\infty)\times \R^N$.  
 Moreover, for each fixed $k$, we proved in Step 2  that  $\limsup_{t\to +\infty} u_k(t,x) = \overline u_k$ and $\liminf_{t\to +\infty} u_k(t,x) = \underline u_k$. 
 Note that both $\overline u_k$ and $\underline u_k$ are bounded (due to the fact that $(h_k)$ are uniformly bounded), so we can extract a converging subsequence. 
 
Let $t_n\to +\infty$ and $x_n\to x$ such that $\lim _n u(t_n,x_n)=\overline{u}$. 
By uniform convergence of  $u_k$ to $u$ in $[0, +\infty)\times \R^N$, for every $\eps>0$ there exists $\overline k$ such that
for every $k\geq \overline k$, 
\[u_k(t_n,x_n)-\eps\leq u(t_n,x_n)\leq u_k(t_n,x_n)+\eps \qquad\forall k\geq \overline k.\] Letting $n\to +\infty$ we obtain from the previous inequalities 
\[\overline u\leq \overline u_k+\eps \qquad\forall k\geq \overline k,\]  and then letting $k\to +\infty$, we conclude
\[\overline u\leq \lim_k \overline u_k.\]

Let fix $x$, $k\geq \overline k$ and $t_n^k\to +\infty$ such that $\lim_n u_k(t_n^k, x)=\overline u_k$. Then there exists $n_k$ such that for every $n\geq n_k$
\[\overline u_k\leq u_k(t_n^k, x)+\eps\leq u(t_n^k, x)+2\eps.\]
Letting $n\to +\infty$, we get that \[\overline u_k \leq \limsup_{t\to +\infty} u(t,x)+2\eps \leq \limsup_{t\to +\infty, y\to x} u(t,y)+2\eps=\overline u+2\eps.\] 
So, letting $k\to +\infty$, we get that $\lim_k \overline u_k \leq \limsup_{t\to +\infty} u(t,x)\leq \overline u$.
Therefore, we conclude  that $\overline u=\limsup_{t\to +\infty} u(t,x)=\lim_k\overline u_k$. 

The same argument gives the result for the $\liminf$. 
\end{proof}
\begin{rem}
\label{stab:linear}
\upshape
An example where the last statement of Theorem \ref{longt} holds true is a linear operator 
 whose coefficients satisfy, for some $R_o, M_o>0$,
 \begin{equation}
 \label{dissip}   
tr\, a(x)+b(x)\cdot x\leq -M_o \qquad \forall\, |x|>R_o ,
\end{equation}
which is equivalent to \eqref{C4bis} and slightly stronger than \eqref{C4}.
Then the stochastic process $d X_t=b(X_t) dt + \sqrt 2 \sigma (X_t) dW_t$ generated by the operator $L=tr\, \sigma\sigma^TD^2 + b\cdot D$ is ergodic with a unique invariant probability measure $\mu$, see, e.g., \cite{BCM}. 
Moreover $\lim_{t\to +\infty}\mathbb{E}h(X_t) =\int_{\R^N} h(y) d\mu(y)$ locally uniformly in $x=X_0$ (Prop. 4.4 of \cite{BCM}).
Since the solution of the Cauchy problem \eqref{Cauchy} is 
$u(t,x)=\mathbb{E} h(X_t)$, 
we have   in this case that 
 \begin{equation}
 \label{average}   
 \ol u = \ul u =\int_{\R^N} h(y) d\mu(y).
 \end{equation}
\end{rem}
\begin{rem}
\label{stab:calore}
\upshape
Without a dissipativity condition like \eqref{dissip} the equality $ \ol u = \ul u$ 
  cannot be true for all 
   bounded initial data $h$, even for the heat equation in dimension $N=1$, see the example in \cite{CE}. For linear equations various authors 
  studied the further averaging properties of $h$ 
  necessary and sufficient for the stabilization to a constant, $ \ol u = \ul u$, see \cite{EKT} and the references therein.
\end{rem}
\begin{rem}
\label{stab:nonlin}
\upshape
For a nonlinear operator $F$ of HJB type one may hope for a 
 formula like \eqref{average}   if an associated optimal control problem  with long-time 
 cost or payoff (a so-called ergodic control problem) has an optimal feedback producing an ergodic process with unique invariant measure $\mu$. In principle such a feedback can be synthesized from a stationary HJB equation in $\R^N$ (see next section).  So far, this has been done with PDE methods  only in some special model problems of the form  $F[u]=-\Delta u +|Du|^q+l(x)$ with $q>1$, see, e.g., \cite{Ichi, Cir}. Representation formulas like \eqref{average} have been obtained by probabilistic methods  under appropriate dissipativity conditions on the control system in \cite{ABG}, see also \cite{CFP}. 
 Results of this kind under our growth assumption \eqref{Fg|p|} look considerably harder and are beyond the scope of this paper.
 
 For general operators $F$ of HJB type with all the data $\Z^N$-periodic one can exploit the compactness of the flat torus to show that $\ol u = \ul u$, although an integral representation as  \eqref{average} of such constant is not available. See \cite{BA2}, where the operators can also be of Isaacs type, i.e., inf sup or sup inf of linear operators.  Related results for Ornstein-Uhlenbeck type operators of the form $F[u]=-\Delta u +\alpha x\cdot Du +H(Du)+l(x)$ has been obtained in \cite{fil}.  
\end{rem}

\section{Ergodic HJB equations 
 in $\R^N$}
 \label{sect:erg}
In this section we consider the  so-called ergodic HJB equation
 \begin{equation}
 \label{cell}   
F(x, D\chi(x),D^2\chi(x))= c,\qquad   x\in\R^N,
\end{equation}
where the unknowns are $(c,\chi)\in\R\times C(\R^N)$, $F$ is of the form  
 \begin{equation}
 \label{fnuova} 
 F(x, p, X)= \begin{cases} \ \inf_{\alpha\in A} \{-tr\, a(x,\a)X - b(x,\a)\cdot p -l(x,\a)\} \quad\forall\, x, p, X,\\ \text{ or} \\ \ \sup_{\alpha\in A} \{-tr\, a(x,\a)X - b(x,\a)\cdot p -l(x,\a)\} \quad\forall\, x, p, X, \end{cases}
 \end{equation}
the coefficients  $b, a$ satisfy assumptions \eqref{C1} and \eqref{C3}, and   the function $l:\R^N\times A\to \R$ is continuous, bounded, 
and uniformly continuous in $x$, uniformly with respect to $\a$. 

In order to study the well posedness of \eqref{cell}, we need to strengthen  assumption \eqref{w},
by imposing, roughly speaking, that $G[w]\to +\infty$ as $x\to +\infty$, see assumption \eqref{C10strong} below. 

\begin{thm}
\label{cellteo}  
Assume that $F$  is as in \eqref{fnuova}, and that  for every $M>0$ there exists $R>0$ such that 
\begin{equation}
\label{C10strong}
\sup_{a\in A}\{ tr\, a(x,\a) + b(x,\a)\cdot x  \}  \leq -M  \quad\text{ for } |x|\geq R.
\end{equation}
Then exists a unique constant $c\in\R$ for which  \eqref{cell} 
admits a viscosity solution $\chi$ such that 
\begin{equation}
\label{growthcorr} 
\lim_{|x|\to +\infty} \frac{\chi(x)}{|x|^2} =0.
\end{equation}
Moreover $\chi\in C^2(\R^N)$ and is unique up to additive constants among all solutions $v$ to  \eqref{cell} which satisfy \eqref{growthcorr}. 

Finally, if $a(x,\a)$  is 
bounded in $\R^N\times A$, 
then $\chi$ is unique up to additive constants
also among  all solutions $v$ to  \eqref{cell} with polynomial 
 growth at infinity, that is, for which there exists $k\geq 2$ such that 
\[\lim_{|x|\to +\infty} \frac{v(x)}{|x|^k}=0.\]
\end{thm}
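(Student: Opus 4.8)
The plan is to prove the three assertions of Theorem \ref{cellteo} --- existence of the critical pair $(c,\chi)$, uniqueness of $c$ and of $\chi$ up to constants under the growth \eqref{growthcorr}, and the sharper uniqueness under polynomial growth when $a$ is bounded --- by the vanishing-discount method combined with the Liouville results of Section \ref{sect:HJB}.

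\medskip
\textbf{Step 1: the discounted problems and a uniform gradient-type bound.} For $\delta>0$ consider
\[
\delta u_\delta + F(x, Du_\delta, D^2u_\delta)=0 \quad\text{in }\R^N .
\]
Since $l$ is bounded, $\pm\|l\|_\infty/\delta$ are a super/subsolution, so by the comparison principle (valid by \eqref{C1}, \eqref{C3}, as in Corollary \ref{cor1}) and Perron's method there is a unique bounded solution $u_\delta$ with $\|\delta u_\delta\|_\infty\le\|l\|_\infty$. The key a priori estimate is that $w(x)=|x|^2/2$, together with assumption \eqref{C10strong}, yields a Lyapunov bound: for $M$ large enough, $w$ is a supersolution of $\delta w + F(x,Dw,D^2w)\le -1$ outside a ball $B_R$, hence by comparison $u_\delta(x)\le u_\delta(x_o)+C w(x)$ type estimates hold (after also bounding $u_\delta$ on $B_R$ using local Krylov--Safonov estimates), so that $v_\delta(x):=u_\delta(x)-u_\delta(0)$ satisfies a uniform-in-$\delta$ bound $|v_\delta(x)|\le C(1+|x|^2)$, in fact $v_\delta(x)=o(|x|^2)$ uniformly. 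The local Krylov--Safonov Hölder estimates (Thm.~5.1 of \cite{Tru88}, used in the proof of Theorem \ref{longt}) then give equi-Hölder continuity of $v_\delta$ on compact sets.

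\medskip
\textbf{Step 2: passage to the limit.} Along a subsequence $\delta_n\to0$, $\delta_n u_{\delta_n}(0)\to -c$ for some $c\in\R$ (bounded since $\|\delta u_\delta\|_\infty\le\|l\|_\infty$), and $v_{\delta_n}\to\chi$ locally uniformly, with $\chi(0)=0$ and $\chi(x)=o(|x|^2)$. By the stability of viscosity solutions, $(c,\chi)$ solves \eqref{cell}, and $\chi\in C^2$ by interior elliptic regularity (Evans--Krylov plus Schauder in the concave/convex case of \eqref{fnuova}); this gives existence and the growth \eqref{growthcorr}.

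\medskip
\textbf{Step 3: uniqueness of $c$ and of $\chi$ via Liouville.} Suppose $(c_1,\chi_1)$ and $(c_2,\chi_2)$ both solve \eqref{cell} with \eqref{growthcorr}, say $c_1\ge c_2$. In the ``$\inf$'' case of \eqref{fnuova}, $G[u]:=\inf_\a\{-L^\a u+0\cdot u\} = F(x,Du,D^2u)+\inf_\a l(x,\a)$ shifted appropriately; more directly, set $z:=\chi_1-\chi_2$. Using the concavity (resp.\ convexity) of $F$ in $(p,X)$ one shows $z$ is a subsolution of a linear operator $-\mathrm{tr}(a(x,\bar\a(x))D^2z)-b(x,\bar\a(x))\cdot Dz \le c_2-c_1\le 0$ for a measurable selection $\bar\a$; the coefficients $a,b$ along this selection still satisfy \eqref{C10strong}, so $w(x)=|x|^2/2$ is a Lyapunov function and Theorem \ref{teoHJB} (with $c\equiv0$) applies to $z$, which has $z(x)=o(|x|^2)$: hence $z$ is constant and then $c_1=c_2$, so both $c$ and $\chi$ (up to additive constants) are unique. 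The delicate point here is producing a genuine linear subequation for the difference when $F$ is only a sup (not inf) of linear operators --- then one argues symmetrically using Theorem \ref{convex} with $W(x)=-|x|^2/2$ and \eqref{wconv}, selecting $\bar\a$ from the supersolution side.

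\medskip
\textbf{Step 4: the polynomial-growth refinement.} If $a(x,\a)$ is bounded, one enlarges the class of admissible Lyapunov functions. For $k\ge2$ take $w_k(x)=(1+|x|^2)^{k/2}$; a computation gives $L^\a w_k = k(1+|x|^2)^{k/2-2}\big[(1+|x|^2)(\mathrm{tr}\,a+b\cdot x) + (k-2)\,x^T a\,x\big] + O(|x|^{k-2})$, and since $\mathrm{tr}\,a$ and $x^Tax$ are $O(|x|^2)$ while $b\cdot x\to-\infty$ faster than any fixed multiple by \eqref{C10strong}, the bracket is $\le -M(1+|x|^2)^{k/2}\big/$(something) for $|x|$ large, so $w_k$ is a Lyapunov function for each $k$. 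Repeating Step 3 with $w_k$ in place of $|x|^2/2$ shows that any two solutions with growth $o(|x|^k)$ differ by a constant. I expect \textbf{Step 1} (the uniform-in-$\delta$, genuinely global quadratic bound on $v_\delta$, reconciling the Lyapunov comparison with the Krylov--Safonov local estimates) and the selection argument in \textbf{Step 3} to be the main obstacles; the rest is by now standard in the vanishing-discount literature cited (\cite{Ev92,AL,BA2,BCR,CCR}).
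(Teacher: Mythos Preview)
Your overall architecture matches the paper's, but there is a genuine gap in your Step~1, precisely at the point you yourself flag as the main obstacle. The Lyapunov comparison only yields
\[
u_\delta(x)\le \max_{|y|\le R_h}u_\delta(y)+h\,\tfrac{|x|^2}{2},
\]
and the boundary term $\max_{|y|\le R_h}u_\delta$ is, a priori, of order $1/\delta$. Local Krylov--Safonov estimates cannot rescue this: they control the oscillation of $u_\delta$ in terms of its $L^\infty$ norm (or the size of the right-hand side), which again blows up as $\delta\to0$. The paper closes this gap by a different idea that your proposal does not contain: a \emph{blow-up/contradiction argument}. One assumes $\|v_\delta\|_{L^\infty(K)}\to\infty$, sets $\psi_\delta:=v_\delta/\|v_\delta\|_{L^\infty(K)}$, uses the Lyapunov bound to see that $|\psi_\delta|\le 1+\eps_\delta|x|^2/2$ globally, applies Krylov--Safonov to the \emph{rescaled} equation (whose right-hand side $\eps_\delta l$ is now small), and passes to a limit $\psi$ with $\|\psi\|_{L^\infty(K)}=1$, $\psi(0)=0$, $|\psi|\le1$ everywhere, solving $\inf_\alpha\{-L^\alpha\psi\}\le0\le\sup_\alpha\{-L^\alpha\psi\}$. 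The Strong Maximum/Minimum Principle then forces $\psi$ constant, contradicting $\psi(0)=0$ and $\|\psi\|_{L^\infty(K)}=1$. This rescaling step is the missing key ingredient.

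A secondary point: in your Step~3 the measurable-selection route is both unnecessary and technically awkward, since the resulting linear operator has merely measurable coefficients and the comparison and strong maximum principles you need are not available for it. The paper avoids this entirely by using subadditivity of the infimum: in viscosity sense,
\[
\inf_{\alpha}\{-L^\alpha(\chi_1-\chi_2)\}\le \inf_{\alpha}\{-L^\alpha\chi_1-l\}-\inf_{\alpha}\{-L^\alpha\chi_2-l\}=c_1-c_2\le0,
\]
so $\chi_1-\chi_2$ is a subsolution of the genuine HJB operator $G$ (with the original smooth coefficients and $c\equiv0$), and Corollary~\ref{cor1} applies directly. Your Step~4 is essentially the paper's Step~6; note though that one needs $a$ bounded to control the extra term $(k-2)|\sigma(x,\alpha)\cdot x|^2/|x|^2$, not $x^Tax$ being $O(|x|^2)$ as you wrote (which would require $a$ bounded anyway).
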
 
\begin{proof}
The proof is divided in several steps. For a similar construction in bounded domains with irrelevant boundary 
we refer to \cite{BCR} (see also \cite{CCR}), whereas the periodic case is considered in  \cite{AL} and \cite{BA2}. We assume that $F(x,p, X)=\inf_{\a\in A} \{-tr\, a(x, \a)X-b(x, \a)\cdot p-l(x,\a)\}$ (the other case can be treated analogoulsy). 

{\bf Step 1. For every $h\in (0,1]$ there exists $R_h$ such that 
\begin{equation}
\label{c3} 
 -h\frac{|x|^2}{2} +\min_{|x|\leq R_h} u_\delta \leq u_\delta(x)\leq \max_{|x|\leq R_h} u_\delta+h\frac{|x|^2}{2},
\end{equation} 
where $u_\delta$ is a 
bounded solution to 
\begin{equation}
\label{delta}
 \delta u+F(x, Du, D^2 u)=0\qquad \text{ in }\R^N, \ \ \delta>0.
 \end{equation} }

For any $\delta>0$ 
consider the value function of a discounted, infinite horizon, stochastic control problem
\[
u_\d(x):=\sup_{\a_.\in \mathcal{A}} \mathbb E \left[\int_0^{+\infty} e^{-\d t}l(X_t,\a _t) dt\right] ,
\]
where $X_t$ solves
\[
dX_t=b(X_t,\a _t) dt + \sigma (X_t,\a _t) dW_t , \qquad  X_0=x ,
\]
$W_t$ is an $m-$dimensional Brownian motion, $ \mathbb E$ is the expectation, and $\mathcal{A}$ denotes the set of admissible controls (i.e., $\a_. : [0,+\infty)\to A$ progressively measurable with respect to the filtration associated to $W_.$). It is easy to deduce form the definition that
\begin{equation}
\label{b1}
 \|u_\delta\|_\infty\leq \frac{1}{\delta}\|l\|_\infty.
 \end{equation} 
 Moreover it is known that under the current assumptions $u_\d$ is continuous and solves \eqref{delta}, see, e.g., \cite{FS}.

Consider $w(x)=|x|^2/2$. Then,  we get  \begin{equation}\label{c1} 
F(x, Dw, D^2 w)\geq -\sup_{a\in A}\{ tr\, a(x,\a) + b(x,\a)\cdot x  \}-\|l\|_\infty.\end{equation}

Fix $h\in (0,1]$ and let $M=\frac{2}{h} \|l\|_\infty+1$. Choose $R_h>0$ such that \eqref{C10strong} holds for such $M$. 
Then, the function
 \[
 V(x)=h\frac{|x|^2}{2}+\max_{|y|\leq R_h} u_\d
 \] 
 is a supersolution to \eqref{delta} in $|x|>R_h$, due to \eqref{c1} and \eqref{b1}. Indeed 
\begin{eqnarray}\nonumber \delta V+F(x, DV, D^2V) &\geq &\delta h\frac{|x|^2}{2} +\delta\max_{|y|\leq R_h} u_\d -h \sup_{a\in A}\{ tr\, a(x,\a) + b(x,\a)\cdot x  \}-\|l\|_\infty\\ 
&\geq & -\|l\|_\infty +Mh-\|l\|_\infty\geq h>0.\label{c2}\end{eqnarray} 

Note that  $(u_\d -V)(x)\leq 0$ for every $x$ with $|x|\leq R_h$, and $\lim_{|x|\to +\infty} u_\d (x)-V(x)=-\infty$. We claim that $u_\d(x) -V(x)\leq 0$ for every $x$. 
If it were not the case, there would exist a point $\bar x$ such that $|\bar x|>R_h$ and $u_\d (\bar x)-V(\bar x)=\max u_\d -V >0$.  But then \eqref{c2} would contradict the fact that $u_\d$ is a viscosity subsolution to \eqref{delta}.

So we get  that for every $h\in (0,1]$ there exists $R_h$ such that  the second inequality in \eqref{c3}  holds,
where the first inequality is obtained analogously by considering $v(x)=- h\frac{|x|^2}{2}+\min_{|y|\leq R_h} u_\d$. 

{\bf Step 2. The functions $v_\delta=u_\delta-u_\delta (0)$ are equibounded in every compact set $K$}.

Assume by contradiction that there exists   $K$ compact such that $(\eps_\delta)^{-1}:=\|v_\delta\|_{L^\infty(K)} \to +\infty$. Up to enlarging $K$ we can suppose  that 
$K\supset  \{x\ |\ |x|\leq R_1\}$ where $R_1$ has been defined in Step 1. 
 
Define $\psi_\delta =\eps_\delta v_\delta$. Then $\|\psi_\delta\|_{L^\infty(K)}=1$ and $\psi_\delta(0)=0$. 
Moreover, by Step 1, we get that if $x\not\in K$, then \[\psi_\delta(x)\leq \frac{\frac{|x|^2}{2} +\max_{|x|\leq R_1} u_\delta-u_\delta (0)}{\|u_\delta-u_\delta(0)\|_{L^\infty(K)}}
\leq 1+\eps_\delta \frac{|x|^2}{2} \] and \[ \psi_\delta(x)\geq -1-\eps_\delta\frac{|x|^2}{2}.\] 

Therefore the sequence $\psi_\d$ is equibounded in every compact subset of $\R^N$. 
Moreover, since $u_\delta$ solves \eqref{delta}, 
$\psi_\delta$ solves in viscosity sense
\[
\delta \psi_\delta +\delta \eps_\delta u_\delta(0)+ \inf_{\alpha\in A} \{-tr\, a(x,\a)D^2\psi_\delta- b(x,\a)\cdot D\psi_\delta -\eps_\delta l(x,\a) \}=0 .
\] 
Since  $l$   and $\delta u_\delta(0)$ are bounded (uniformly in $\delta$), we argue as in Step 2 of the proof of Theorem \ref{longt}, and we  apply the estimates of Krylov-Safonov type as stated in Thm. 5.1 of \cite{Tru88}. In particular, these imply that the family $\psi_\delta$ is equi-H\"older continuous in every compact set of $\R^N$. Using a diagonal procedure, we 
can find a sequence $\psi_\delta$ converging locally uniformly in $\R^N$ to $\psi$. Moreover, by stability of viscosity solutions, $\psi$ solves in viscosity sense
\[\inf_{\alpha\in A} \{-tr\, a(x,\a)D^2\psi - b(x,\a)\cdot D\psi \}\leq 0\qquad \text{and}\qquad \sup_{\alpha\in A} \{-tr\, a(x,\a)D^2\psi - b(x,\a)\cdot D\psi \}\geq 0.\]

Moreover, we know that $\|\psi\|_{L^\infty(K)}=1$ and $|\psi(x)|\leq 1 $ for $x\in \R^N\setminus K$ since $|\psi_\delta|\leq 1+\eps_\delta \frac{|x|^2}{2}$. 
This implies that $\psi$ attains either a global maximum or a global minimum in $K$, so it is constantly equal to $1$ or to $-1$ by the Strong Maximum  or Minimum Principle (\cite{BDL1}, \cite{BDL}). This contradicts the fact that $\psi(0)=0$. 

{\bf Step 3. Construction of $c$ and $\chi$ solutions to \eqref{cell}.} 

Due to \eqref{b1}, up to extracting a subsequence, 
 $\delta u_\delta(0)$ converges to $-c$ as $\delta\to 0$. Moreover, by Step 2, $v_\delta$ are equibounded in every 
compact set of $\R^N$ and are viscosity solutions to \[\delta v_\delta+\delta u_\delta(0)+F(x, Dv_\delta, D^2 v_\delta)=0.\] Using again Krylov-Safonov type estimates, we get that actually $v_\delta$ are equi-H\"older continuous in every compact set of $\R^N$. So, using a diagonal 
 procedure, we can extract a  subsequence $v_\delta$ which converges locally uniformly to $\chi$. Moreover by stability of viscosity solutions $c,\chi$ solve \eqref{cell}. 

{\bf Step 4. Qualitative properites of $\chi$}. 

Note that the estimates \eqref{c3} is independent of $\delta$, so it holds also for $\chi$: for every $h\in (0,1]$ there exists $R_h>0$ such that \[-h\frac{|x|^2}{2}+\min_{|y|\leq R_h} \chi(y)\leq \chi(x)\leq \max_{|y|\leq R_h} \chi(y)+h\frac{|x|^2}{2}.\] This implies in particular that $\chi$ satisfies the growth condition \eqref{growthcorr}. The regularity of $\chi$ comes from 
elliptic standard regularity theory, see \cite{Tru88}. 

{\bf Step 5. Uniqueness of $c$ and of $\chi$ up to  additive constants.} 
Assume that there exist $c_1\leq c_2$ and two solutions $\chi_1, \chi_2$ to \eqref{cell}  with $c=c_1$ and $c=c_2$, respectively,  which satisfy  \eqref{growthcorr}. 
Then  we get 
\begin{eqnarray*}
 & & \inf_{\a\in A} \{-tr \, a(x, \a)(D^2\chi_1-D^2\chi_2)-b(x, \a)\cdot (D\chi_1-D\chi_2)\} \\ &\leq&  
 \inf_{\a\in A} \{-tr \, a(x, \a)D^2\chi_1-b(x, \a)\cdot D\chi_1-l(x,\a)\}\\ &- &\inf_{\a\in A} \{-tr \, a(x, \a)D^2\chi_2-b(x, \a)\cdot D\chi_2-l(x,\a)\}= c_1-c_2\leq 0,
 \end{eqnarray*}
  where all the equalities and inequalities have to be understood  in the viscosity sense. 
So, by Corollary \ref{cor1} applied to $\chi_1-\chi_2$, we get that $\chi_1-\chi_2$ is a constant. This implies in particular that $c_1=c_2$. 

{\bf Step 6. 
Stronger uniqueness for $a$ 
 bounded.} 

Let consider two solutions  $\chi_1, \chi_2$ to \eqref{cell}  with $c=c_1$ and $c=c_2$ respectively  such that there exists $k\geq 2$
with
\[
\lim_{|x|\to +\infty} \frac{\chi_i(x)}{|x|^k} =0.
\] 
As above, $\chi_1-\chi_2$ satisfies $\inf_{\a\in A} \{-L^\a (\chi_1-\chi_2)\} =c_1-c_2\leq 0$. 

Consider $w(x)=|x|^k/k$.  Then 
\[
L^\a w= |x|^{k-2} \left[tr\, a(x,\a)+b(x,\a)\cdot x
+(k-2)\frac{|\sigma(x,\a)\cdot x|^2}{|x|^2}\right].
\]
Since  $a(x,\a)$ is bounded, also $\sigma(x,\a)$ is bounded, and then 
the term $(k-2)\frac{|\sigma(x,\a)\cdot x|^2}{|x|^2}$ is bounded. 
Therefore condition \eqref{C10strong} implies that there exists $R_o$ such that  $L^\a w(x)\leq 0$ for $|x|\geq R_o$. So, by the same argument of the proof of Corollary \ref{cor1} 
applied to the function $|x|^k/k$ instead of $|x|^2/2$, we get that $\chi_1-\chi_2$ is a constant, and then $c_1=c_2$. 
\end{proof} 
\begin{rem}
\upshape \label{remou} 
 If we strengthen condition \eqref{C10strong}, we can get 
  better   estimates on the growth at infinity of  the solution $\chi$ to \eqref{cell}. 

In particular, if we  substitute assumption \eqref{C10strong} with the following: for some $0<\beta<2$, for every $M>0$ there exists $R>0$ such that  
\begin{equation}
\label{C10ou}
\sup_{a\in A}\{ tr\, a(x,\a) + b(x,\a)\cdot x  \} \leq  -M |x|^{2-\beta} \quad\text{ for } |x|\geq R,
\end{equation}
then the same argument of Theorem \ref{cellteo}, with $w(x)=\frac{|x|^\beta}{\beta}$ in place of $\frac{|x|^2}{2}$, gives that the solution $\chi$ to \eqref{cell} has a strictly sub-quadratic growth at infinity, that is 
 \begin{equation}
 \label{subq}
 \lim_{|x|\to +\infty} \frac{\chi(x)}{|x|^\beta}=0.
 \end{equation}
In particular, for perturbations of the Ornstein-Uhlenbeck drift as in \eqref{OU} of Remark \ref{rem:OU} the solution $\chi$ satisfies \eqref{subq} for all $\beta >0$, so it grows at infinity less than any polynomial.

In the limit case where \eqref{C10ou} holds with $\beta=0$
we can use $w(x)=\log |x|$  
and get that the solution $\chi$ to \eqref{cell} has sublogarithmic growth at infinity, that is, 
\[\lim_{|x|\to +\infty} \frac{\chi(x)}{\log |x|}=0.\]  
\end{rem} 
\begin{rem}
\upshape \label{remou2} On the other hand, 
 if we weaken assumption \eqref{C10strong}, we get weaker results on the growth at infinity of $\chi$. For example, let us assume that there exist $k>2$ 
 and $R_o>0$ such that  
 \[\sup_{a\in A}\{ tr\, a(x,\a) +(k-2)\frac{|\sigma(x,\a)\cdot x|^2}{|x|^2}+b(x,\a)\cdot x  \} \leq  0 
   \quad\text{ for } |x|\geq R_o,\  i=1,2. \]
 Then, arguing again as in Theorem \ref{cellteo} with $w(x)=|x|^k/k$, we get that the solution $\chi$ to \eqref{cell} satisfies   \[\lim_{|x|\to +\infty} \frac{\chi(x)}{|x|^k}=0 \]
 instead of \eqref{growthcorr}.
\end{rem} 

We conclude this section with some results on the possible boundedness of  
the solution $\chi$  to the ergodic equation \eqref{cell}. 
The next example shows that in general it can be unbounded.
\begin{ex}\upshape
Consider the case of $N=1$, $A$ a singleton, 
$b(x)=-x$, $a(x)=1$ and $l(x)= 2 \frac{x^4+2x^2-1}{(x^2+1)^2}$. In this case \eqref{C10ou} is satisfied for every $\beta<2$ and the ergodic  problem \eqref{cell} reads as follows
 \[
  -\chi'' +x \chi' - 2\frac{x^4+2x^2-1}{(x^2+1)^2}=c.
  \] 
So, by Theorem \ref{cellteo} and Remark \ref{remou}, there exists a unique $c$ for which this equation has a solution  which satisfies \eqref{subq}. 
It is easy to check that this solution is $c=0$ and $\chi(x)=\log(1+x^2)$ up to addition of constants. 
\end{ex}

On the other hand, the solution $\chi$ to \eqref{cell}  is bounded if we strenghten condition \eqref{C10ou} to the following:
there exist $\rho>0$ 
and $R>0$ such that 
\begin{equation}
\label{C10extrastrong}
\sup_{a\in A}\{ tr\, a(x,\a) + b(x,\a)\cdot x  \}  \leq  
-\frac{2|c|+\|l\|_\infty}{\rho} |x|^{2+\rho}     \quad\text{ for } |x|\geq R,
\end{equation}
where $c$ is the constant solving the ergodic equation \eqref{cell}. This is proved  in the following proposition (see also \cite{CCR} for a similar result in bounded domains).  

\begin{thm
} 
\label{bounded_chi}
Let $(c,\chi)$ be the solution to \eqref{cell} as constructed in Theorem \ref{cellteo}. If \eqref{C10extrastrong} holds, then
\begin{equation}\label{corr1} \min_{|y|\leq R } \chi(y)+\frac{1}{|x|^\rho}-\frac{1}{R^\rho}\leq \chi (x)\leq  \max_{|y|\leq R } \chi(y)-\frac{1}{|x|^\rho}+\frac{1}{R^\rho}\qquad \forall |x|\geq R.\end{equation}
In particular,  $\chi\in L^\infty(\R^N)$.
\end{thm}
\begin{proof}
First of all observe that $\chi(x)-ct$ is a solution of  \begin{equation}\label{para} u_t+F(x, Du,D^2u)=0, \qquad \forall x\in\R^N, \ t\in (-\infty, +\infty).\end{equation} 
For $R>0$ given by assumption \eqref{C10extrastrong} 
define  $w(x):=R^{-\rho}-|x|^{-\rho}$.  
Then $w\geq 0$ for $|x|\geq R$ and 
\begin{multline}
\label{boh}
\sup_{a\in A}\{ tr\, a(x,\a) D^2 w(x)+ b(x,\a)\cdot Dw(x)  \} =\\
 \frac{\rho}{|x|^{\rho+2}}\sup_{a\in A}\left\{ tr\, a(x,\a) + b(x,\a)\cdot x-\frac{\rho+2}{|x|^{2} }|\sigma(x,\a)\cdot x|^2\right\} 
 \leq 
  -2|c|-\|l\|_\infty \qquad |x|>R.
\end{multline}
Fix now $h\in (0,1)$ and consider $t_h<0$ such that \begin{equation}\label{corr2} 
-|c|t_h\geq \max_{|y|\geq R} \left(\chi(y)-h\frac{|y|^2}{2}\right)-\max_{|z|\leq R} \chi(z), \end{equation}
where the first maximum exists due to \eqref{growthcorr}. 
Define the function 
\[
v(t,x):= \max_{|z|\leq R} \chi(z)+h\frac{|x|^2}{2}+R^{-\rho}-|x|^{-\rho}-2|c| t\qquad t_h\leq t\leq 0, \ x\in\R^N.
\]
We claim that $\chi(x)-ct\leq v(t,x)$ for every $t\in [t_h,0]$ and every $x$ 
 with $|x|\geq R$.
First of all observe that the inequality holds at $t=t_h$. 
Indeed, 
by our choice of $t_h$, for $|x|\geq R$ we get 
\[v(t_h, x)\geq  \max_{|z|\leq R} \chi(z)+h\frac{|x|^2}{2}-2|c| t_h\geq \chi(x) -|c|t_h\geq \chi(x)-c t_h.\] 
Moreover, if $|x|=R$ and $t\leq 0$, then $v(t,x)\geq \max_{|z|\leq R} \chi(z)- |c| t\geq \chi(x)-ct$.
Now assume by contradiction that $v(s,y)\leq \chi(y)-cs$ for some $|y|\geq R$ and $s\in [t_h, 0]$. Then, using again \eqref{growthcorr},
 $\max_{|y|\geq R,  s\in[t_h,0]} \chi(y)-cs-v(s,y)= \chi(x)-ct-v(t,x)>0$
 for some $|x|>R$ and $t\in (t_h, 0]$. 
From \eqref{boh} we get 
\begin{eqnarray*} 
v_t(t,x)+F(x, Dv(t,x), D^2v(t,x))&\geq & -2|c|-\left(h+\frac{\rho}{|x|^{\rho+2}}\right)\sup_{a\in A}\{ tr\, a(x,\a)+ b(x,\a)\cdot x \} -\|l\|_\infty\\ 
& \geq & -2|c| +2|c|+\|l\|_\infty +h|x|^{\rho+2}   \frac{2|c|+\|l\|_\infty}{\rho}-\|l\|_\infty\\ &\geq & h|x|^{\rho+2}   \frac{2|c|+\|l\|_\infty}{\rho}>0 ,
\end{eqnarray*} 
which 
contradicts the fact that $\chi(x)-ct$ is a subsolution to \eqref{para}. 

So, in particular, $\chi(x)\leq v(0,x)$, which gives  the inequality on the right  of \eqref{corr1} after letting $h\to 0$. The inequality on the left is obtained similarly, by considering $w(t,x)=  \min_{|z|\leq R} \chi(z)-h\frac{|x|^2}{2}-R^{-\rho}+|x|^{-\rho}+2|c| t$. 
\end{proof} 
\begin{rem}
\upshape \label{remell} 
Assume the matrix $a$ in the operator $F$ has a positive lower bound on the minimal eigenvalue, therefore strenghtening \eqref{C3} to
\begin{equation}
\label{C3*}
\xi^T a(x,\a) \xi\geq \lambda|\xi|^2 \quad \forall \xi,x\in\R^N,
\end{equation}
for some $\lambda>0$. Note that this implies the first inequality in the uniform ellipticity condition \eqref{unell} for both possible forms of $F$
 \eqref{fnuova}. 
Then the conclusions of Theorem \ref{bounded_chi} remain true if condition \eqref {C10extrastrong} is replaced by the weaker assumption
\begin{equation}
\label{C10lessstrong}
\sup_{a\in A}\{ tr\, a(x,\a) + b(x,\a)\cdot x  -\lambda(2+\rho)\}  \leq  -\frac{2|c|+\|l\|_\infty}{\rho} |x|^{2+\rho}     \quad\text{ for } |x|\geq R.
\end{equation}
In fact, \eqref{C3*} implies $|\sigma(x,\a)\xi|\geq \lambda|\xi|$, which can be used in \eqref{boh} with \eqref{C10lessstrong} to get the same conclusion.
\end{rem}

\end{document}